\setlist[description]{leftmargin=1.1cm,labelindent=0.3cm}
\DeclareMathOperator{\Var}{\textnormal{Var}}
\def\limsup{\mathop{\rm lim\,sup}\limits}
\def\Z{\mathbb{Z}}
\def\E{\mathcal{E}}
\def\Thom{\mathbb{T}^{\text{hom}}_d}
\newcommand{\Aut}{\textup{Aut}}
\newcommand{\ind}[1]{\mathbf{1}{\{ #1 \}}}
\newcommand{\f}{\frac}
\renewcommand{\Z}{\mathbb Z}
\renewcommand{\E}{\mathbf E}
\renewcommand{\P}{\mathbf P}
\newcommand{\B}{\mathbb B}
\newcommand{\Zo}{\vec{\mathbb Z}^d}
\DeclareMathOperator{\Poi}{Poi}
\newcommand{\HOX}[1]{\marginpar{\footnotesize #1}}
\newcommand{\Vb}{V^{\text{branch}}}
\renewcommand{\root}{\mathbf{0}}
\newtheorem{thm}{Theorem}
\numberwithin{thm}{section}
\newtheorem{lemma}[thm]{Lemma}
\newtheorem{prop}[thm]{Proposition}
\newtheorem{question}{Open Question}
\theoremstyle{remark}
\theoremstyle{definition}
\newtheorem{example}[thm]{Example}
\begin{document}
	
	\title{Parking on transitive unimodular graphs}

	\author[M.~Damron]{Michael Damron}
	\address{Michael Damron, Department of Mathematics, Georgia Institute of Technology, 
		Atlanta, GA 30332}
	\email{\texttt{mdamron6@gatech.edu}}

	\author[J.~Gravner]{Janko Gravner}
	\address{Janko Gravner, Department of Mathematics, University of California, 
		Davis, CA 95616}
	\email{\texttt{gravner@math.ucdavis.edu}}
	
	\author[M.~Junge]{Matthew Junge}
	\address{Matthew Junge, Department of Mathematics, Duke University, 
		Durham, NC 27708}
	\email{\texttt{jungem@math.duke.edu}}
	
		\author[H.~Lyu]{Hanbaek Lyu}
	\address{Hanbaek Lyu, Department of Mathematics, University of California, Los Angeles, CA 90095}
	\email{\texttt{colourgraph@gmail.com}}
	
	\author[D.~Sivakoff]{David Sivakoff}
	\address{David Sivakoff, Departments of Statistics and Mathematics, The Ohio State University, Columbus, OH 43210}
	\email{\texttt{dsivakoff@stat.osu.edu}}

	\keywords{
		Annihilating particle system,
		random walk, and
		blockades}
	\subjclass[2010]{60K35, 82C22,82B26}

	\begin{abstract}
		 Place a car independently with probability $p$ at each site of a graph. Each initially vacant site is a parking spot that can fit one car. Cars simultaneously perform independent random walks. When a car encounters an available parking spot it parks there. Other cars can still drive over the site, but cannot park there. For a large class of transitive and unimodular graphs, we show that the root is almost surely visited infinitely many times when $p \geq 1/2$, and only finitely many times otherwise. 
		 \end{abstract}

	\maketitle

	\section{Introduction}
\label{sec:intro}

The study of parking functions dates back over 50 years to the work of Konheim and Weiss \cite{KW66}. Motivated by a hashing algorithm, they introduced a parking process on the path with $n$ vertices. It starts with a parking spot at each vertex and  $x_i$ cars at spot $i$.  The configuration vector $x=(x_1,\hdots,x_n)$ has the multinomial distribution that comes from placing each of the $\lceil \alpha n \rceil$ cars independently 
at a uniformly chosen random spot. Cars then deterministically drive with unit velocity towards $1$  and park at the first available spot. Parking spots fit at most one car and ties are broken arbitrarily. 

Configurations $x$ for which all cars park are called \emph{parking functions}. The authors of \cite{KW66} find that the probability that a random initial configuration is a parking function converges to $(1-\alpha) e^{\alpha}$ as $n \to \infty$. 
Parking functions have attracted a lot of attention since. Their relation to polytopes was studied by  Stanley and  Pitman in \cite{stanley}. More recently,  Diaconis and  Hicks studied the geometry of a random parking function \cite{persi}.

In this paper, we modify the underlying graph and car trajectories so that the parking process is less combinatorial and more like an interacting particle system from statistical physics.  We consider a process in which vertices of a graph $(\mathcal{V},\mathcal{E})$ are each initially independently labeled a \emph{car} or a parking \emph{spot} with respective probability $p$ and $1-p$. The cars perform independent random walks according to a transition kernel $K$ and stop at a spot if it is unoccupied. If multiple cars arrive at the same unoccupied spot at the same time, one of them is uniformly chosen to park. Cars arriving at occupied spots pass through unaffected. For each site $v\in \mathcal{V}$, let $V_t^{(v)}$ be the number of visits to $v$ up to time $t$ by cars (not including time zero), and let $V^{(v)} = \lim_{t\rightarrow \infty} V_{t}^{(v)}$ be the total number of visits to $v$. When the $V^{(v)}_t$ have the same distribution for all $v$, we write $V_t$ and $V$ for random variables with the same distributions as $V^{(v)}_t$ and $V^{(v)}$.

At $p=1/2$, the densities of cars and spots are equal. It is natural to guess that a phase transition in $V^{(v)}$ occurs at this balance point. We show that this is  true for sufficiently homogeneous graphs, and describe the behavior at criticality under rather general conditions. Before describing these conditions, we state a special case of our result on the lattice  (see Figure \ref{fig:sim_Z}).

\begin{thm}\thlabel{thm:main_lattice}
	Consider the parking process on $\mathbb{Z}^{d}$ with simple symmetric random walks. 
	\begin{description}
		\item[(i)]  If $p \ge 1/2$, then $V$ is infinite almost surely. Moreover, $\E V_{t} = (2p-1)t+o(t)$. 
		\vspace{0.1cm}
		\item[(ii)] If $p<1/2$,  then $V$ is finite almost surely. Moreover, if $p<(256 d^6 e^2)^{-1}$, then $\E V<\infty$.
	\end{description}		
\end{thm}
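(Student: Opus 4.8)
The plan is to deduce both quantitative refinements from a single tagged‑car identity coming from the mass‑transport principle, and then to isolate the genuinely model‑specific inputs. Run the process and let $\tau$ be the time at which the car started at the root parks (with $\tau=\infty$ if it never parks), defined on the event that the root is initially a car. Applying mass transport to the function that transports, for each active car and each time $s$, a unit of mass from its starting site to its current site, one obtains $\E V_t = p\sum_{s=1}^{t}\P(\tau\ge s)$. Hence $\E V = p\,\E\tau$, and since $\P(\tau\ge s)$ decreases to $q_\infty:=\P(\tau=\infty)$, Cesàro summation gives $\E V_t = p\,q_\infty\,t+o(t)$. A second application, transporting mass from the starting site of a car to its eventual parking site, yields $p(1-q_\infty)=(1-p)\sigma$, where $\sigma$ is the probability that a fixed spot is eventually filled (equality of the densities of parked cars and filled spots). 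Thus $\E V_t=(2p-1)t+o(t)$ is \emph{equivalent} to $\sigma=1$ (which forces $q_\infty=2-1/p\ge 0$, consistent with $p\ge 1/2$), and $\E V<\infty$ is equivalent to $\E\tau<\infty$. So the whole theorem reduces to: (i) $\sigma=1$ and $V=\infty$ a.s.\ when $p\ge1/2$; (ii) $V<\infty$ a.s.\ when $p<1/2$, and $\E\tau<\infty$ when $p$ is small.

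For part (i), the crux is to show $\sigma=1$: a.s.\ every spot is eventually filled when $p\ge1/2$. This is the statement that in the induced annihilation dynamics --- mobile ``car'' particles of density $p$, static ``spot'' traps of density $1-p$, a car and a trap disappearing together upon meeting (with consumed traps still driveable‑over, which only helps the remaining cars) --- the trap species is exhausted at or above balance. I would attempt a counting argument in a large box $B_L$: the number of cars that ever park inside $B_L$ is at most the number of spots there, so a positive density $\delta$ of permanently unfilled spots would force a correspondingly large deficit of car mass that must be explained by net boundary flux through $\partial B_L$; the delicate point is that for $p>1/2$ there genuinely are $\asymp(2p-1)|B_L|$ escaping cars, so one must argue at the level of \emph{net} (in‑minus‑out) flux after a well‑chosen stopping time and show it is $O(L^{d-1})$ in expectation, incompatible with $\delta>0$. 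This is the main obstacle of part (i); an alternative is to import (and adapt to the driveable‑over modification) known results on diffusion‑limited $A+B\to 0$ with one immobile species, which say the minority density, and at balance both densities, tend to $0$. Granting $\sigma=1$, there remains $V=\infty$ a.s. For $p>1/2$ the never‑parking cars have positive density $2p-1$; after the a.s.\ finite time at which a fixed neighbourhood of the root stops changing, they form an asymptotically homogeneous cloud of independent walks, which hits the root infinitely often by a second‑moment (quantitative Borel--Cantelli) argument. The case $p=1/2$ is the most delicate: the cloud of still‑active cars has vanishing density, and one must control the decay rate of that density (and its spatial homogeneity) well enough to conclude that the root is still visited after every time $t$; I expect this criticality step to be the single hardest part of (i).

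For part (ii), the quantitative claim reduces to $\E\tau<\infty$. Couple all cars with independent simple random walks $\hat X^{c}$, each actual car following its walk until it parks. The apparent circularity --- bounding how often a spot near the tagged car has already been stolen by another car seems to require a bound on total travel time, which is what we want --- is broken by the observation that $\tau\le T$, where $T$ is the first time $s$ at which $\hat X^{0}_{s}$ is a time‑$0$ spot \emph{and} no other walk $\hat X^{c}$ has visited $\hat X^{0}_{s}$ by time $s$; this $T$ is a deterministic functional of the independent walks and the i.i.d.\ environment. One then bounds $\P(T>s)$ by union estimates: at each time $r\le s$ at which $\hat X^{0}$ reaches a fresh site, that site is a spot with probability $1-p$ and has been reached by another car with probability at most $p\,\E|R_r|\le p(r+1)$ (the range of an $r$‑step walk has size $\le r+1$), so both required events hold with probability at least $(1-p)(1-p(s+1))$; summing over the $\asymp s$ fresh times and using independence of the environment marks shows the tagged car parks with overwhelming probability by a time $s_0=\Theta(1/p)$, and a renewal step (restart $\hat X^{0}$ at time $s_0$; its future is independent, and for small $p$ the region around it is still mostly undisturbed) upgrades this to $\P(\tau>ks_0)\le\theta^{k}$ with $\theta<1$. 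Hence $\E\tau=O(1/p)$ and $\E V=p\,\E\tau=O(1)$, the explicit threshold $p<(256d^{6}e^{2})^{-1}$ emerging from tracking the polynomial‑in‑$s$‑and‑$d$ constants (range size, volume of a box, Stirling) through the union bounds.

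Finally, $V<\infty$ a.s.\ for \emph{all} $p<1/2$, where $\E\tau$ may be infinite. I would prove this by Borel--Cantelli over cars: $\sum_c\P(c\text{ ever visits }\root)<\infty$, using that a car starting at distance $R$ from the root must survive $\gtrsim R$ fresh steps without parking before it can reach the root, an event of probability at most (essentially) $\ell^{R}$ with $\ell=p+o(1)<1$, which defeats the $R^{d-1}$ growth in the number of such cars. Controlling the ``stolen spot'' contribution to this estimate \emph{without} the smallness of $p$ --- i.e.\ without the clean domination $\tau\le T$, whose analogue is now too lossy --- is the main obstacle for this last assertion, and is where I would expect to need the ideas (a $0$--$1$ law plus a soft comparison, or a multiscale coupling) that the paper presumably develops for general transitive unimodular graphs.
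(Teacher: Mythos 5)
Your mass--transport reductions are sound and match the paper's own dualities: the identity $\E V_t=\sum_{s\le t}\P(\tau\ge s)$ is the paper's Proposition \ref{prop:EV_t and survival prob.}, and $p(1-q_\infty)=(1-p)\sigma$ is Lemma \ref{lem:relate}. But the two places where you say ``this is the main obstacle'' are exactly the places where the theorem actually lives, and your proposed routes through them do not close. For part (i) you reduce everything to $\sigma=1$ and then offer a box-counting/boundary-flux argument (or an appeal to the $A+B\to 0$ literature); neither is carried out, and at $p=1/2$ you concede you have no control of the decaying density of active cars. The paper never proves $\sigma=1$ directly --- it is a \emph{corollary} of $V=\infty$, not an input. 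The missing idea is the exact recursion of Proposition \ref{prop:rde}, $\E V_{t+1}-\E V_t=2p-1+\P(\root\text{ is a spot and }V_t^{(\root)}=0)$, obtained by decomposing arrivals at $\root$ through its neighbors and using $\sum_y K(y,\root)=1$ (mass transport again). This yields a self-bootstrapping dichotomy valid uniformly for $p\ge 1/2$: either $\P(V_t=0)\to 0$, in which case an FKG/monotonicity argument (Lemma \ref{lem:PV}) upgrades $\P(V=0)=0$ to $V=\infty$ a.s.; or $\P(V_t=0)\to\delta>0$, in which case the recursion forces $\E V_t\ge \delta(1-p)t$, and Paley--Zygmund together with the second-moment bound $\E V_t^2\le p(p+1)t^2$ (domination by non-interacting walks plus mass transport) and a $0$--$1$ law gives $V=\infty$ a.s. Once $V=\infty$ a.s.\ is known, the same recursion immediately gives $\E V_t=(2p-1)t+o(t)$. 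Nothing in your sketch substitutes for this mechanism, and the critical case in particular is unproved.

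For part (ii), the qualitative claim is where your proposal is most needlessly stuck: you already derived $(1-p)\sigma=p(1-q_\infty)\le p$, hence $\sigma\le p/(1-p)<1$ for $p<1/2$, so a spot is never filled with positive probability; combined with the monotonicity comparison $\P(V=0)\ge(1-p)\,\P(V=0\mid\root\text{ is a spot})>0$ and the ergodicity $0$--$1$ law, this gives $V<\infty$ a.s.\ for \emph{all} $p<1/2$ --- no Borel--Cantelli over cars, no per-car exponential estimate. Your proposed $\ell^R$ bound with its ``stolen spot'' difficulty is attacking a problem the duality has already solved. For the quantitative claim at small $p$, your domination $\tau\le T$ is fine, but the renewal step is a genuine gap: after restarting the tagged walk at time $s_0$, the environment in its neighborhood is not fresh (it has been depleted by the dynamics and is correlated with the walk's past), so you cannot iterate $\P(\tau>ks_0)\le\theta^k$ by independence; moreover the ``$\asymp s$ fresh sites'' count fails in $d=1,2$. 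The paper avoids any such recursion with a deterministic necessary condition (Lemma \ref{busy_subgraph}): on $\{\tau^{(v)}\ge t\}$ there is a connected ``busy'' set (at least as many initial cars as spots) inside $\mathbb{B}(v,2t)$ containing the car's trajectory. A union bound over lattice animals ($(e\Delta)^j$ of size $j$), a Chernoff bound $(2\sqrt{p(1-p)})^j$ for being busy, and a generating-function estimate for exit times of balls then give $\E V\le F(2e\Delta\sqrt{p(1-p)})<\infty$, which is where the explicit threshold $(256d^6e^2)^{-1}$ comes from.
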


\begin{figure}[h]
	\includegraphics[width=0.48\textwidth]{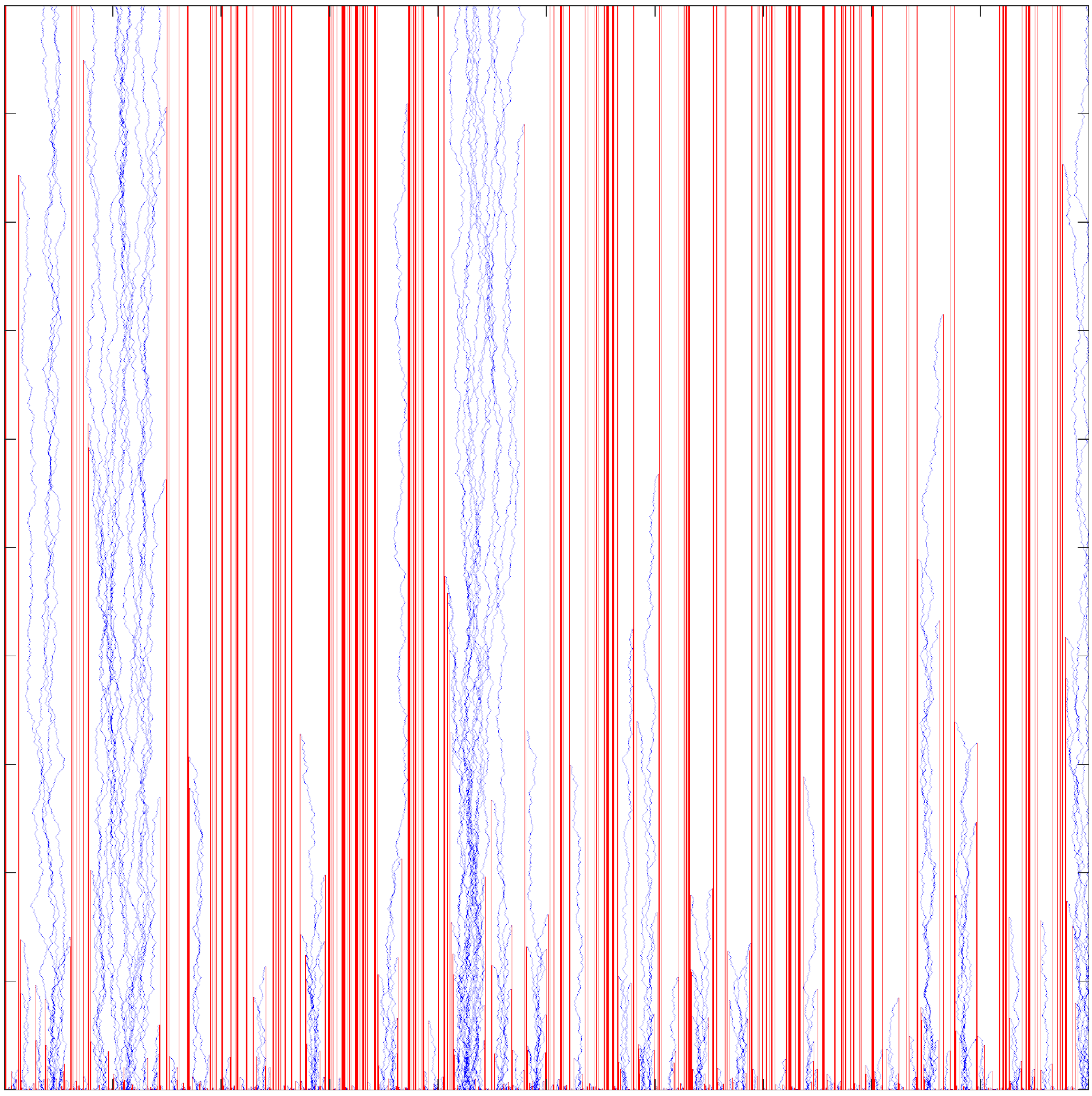}
	\hfill
	\includegraphics[width=0.48\textwidth]{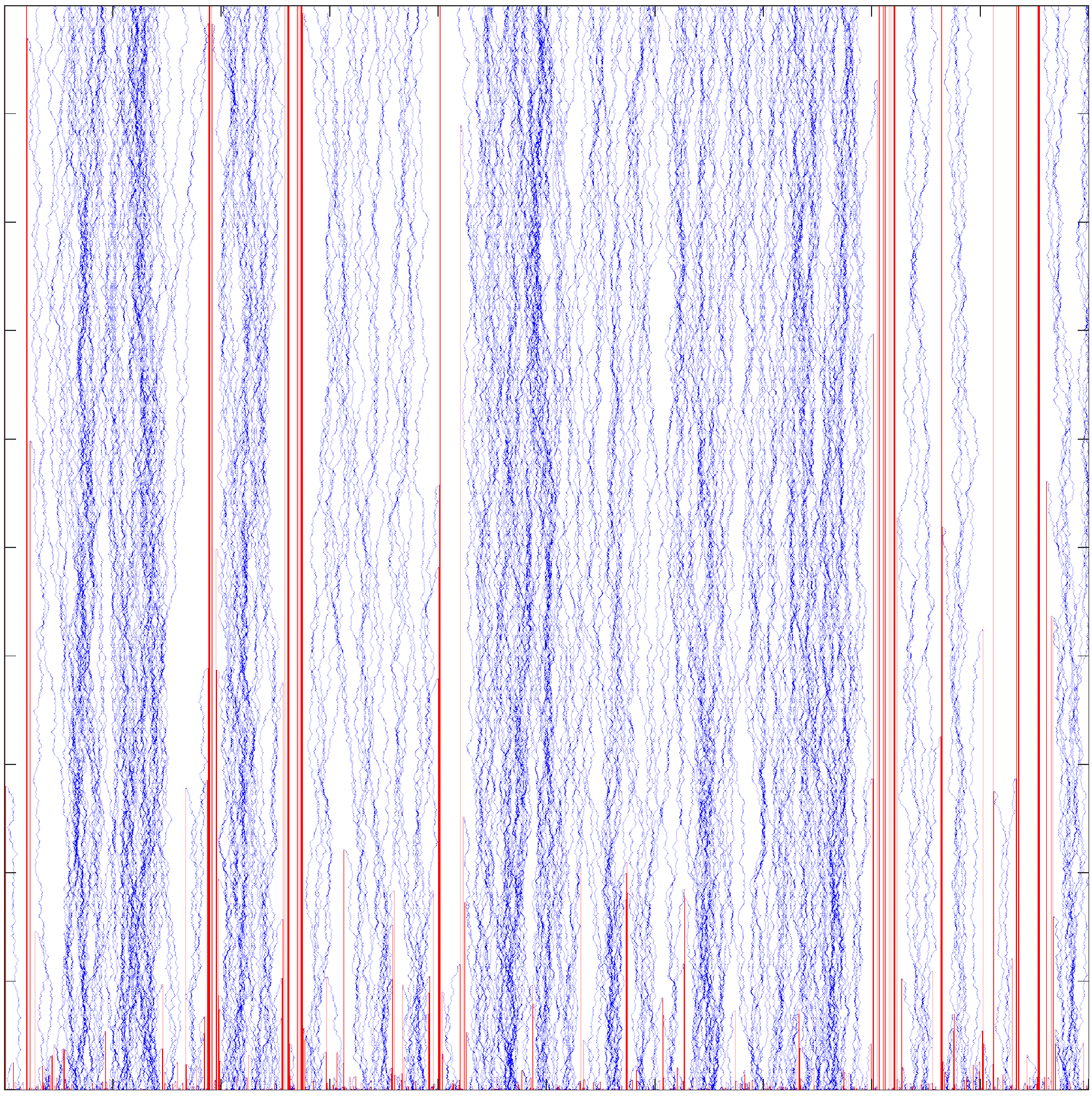}
	\caption{ Simulation of parking process on finite path of $2000$ nodes for 2000 iterations for car density $p=0.45$ (left) and $p=0.55$ (right). Cars and spots are depicted by blue and red dots, respectively. Times runs from bottom to top.  
	}
	\label{fig:sim_Z}
\end{figure}

\subsection{Background and motivation} The setting for our parking dynamics is the class of transitive, unimodular graphs with homogeneous random walk kernels. Applying unimodularity to study interacting particle systems has been a recent trend \cite{tom,muller2015interacting, CRW2}.  This level of generality often allows for simpler proofs and deeper insights about why the theorems hold.  

Cars and spots are a helpful metaphor when visualizing our process. However, the main feature of our dynamics are 
``collisions" between cars and parking spots that result in mutual annihilation. Numerous similar processes have evaded a rigorous analysis, so it may be somewhat surprising that we are able to, in some generality, characterize the phase transition in this annihilating system. We now put our work in the context of a few related models.

A model that is fairly well understood is \emph{annihilating random walk}, which can be described as the parking process in continuous time, with no parking spots ($p=1$), and in which cars mutually annihilate upon colliding. The process was introduced by Erd\"os and Ney \cite{erdos1974} in one dimension, and first analyzed on $\mathbb Z^d$ by Griffeath \cite{griffeath}, Bramson and Griffeath \cite{first}, and Arratia \cite{arratia2, arratia}. To study $p_t$, the probability that a particle is at the origin at time $t$, the authors rely on a parity coupling to \emph{coalescing random walk}, the modified process in which only one particle is annihilated in each collision. The advantage of coalescing random walk is that its time-reversal dual is 
the Markov process known as the \emph{voter model}. This
makes it possible to analyze the coalescing random walk, and thus the annihilating variant, by studying the size of a voter model cluster. 
%This is tractable, because it has the same law as a  time change %of simple, nearest neighbor random walk \cite{CRW1}. 
There does not seem to exist a Markov process with a useful
time-reversal connection to the parking process. However, one of our main tools is another kind of duality, between spots and cars, which is a consequence of a mass transport 
principle on unimodular graphs~(see Lemma \ref{lemma:mass_transport}). 
%
%%%% Rmk: Replaced the following paragraph by the next one, which seems more relevant.
%An alternative approach to studying coalescing random walk for $d \geq 6$ was developed by Kesten and Van den Berg in \cite{kesten}. They made rigorous the heuristic differential equation $p_t' = -c p_t^2$. This comes down to computing the probability that two distant particles reach $0$ at the same time, but do not meet before. This is why they require $d\geq 6$. A similar approach for the parking process seems intractable. For two distant cars to reach 0 at the same time requires the coordination of the entire system, not just two particles.
%
The model closer to ours is the \emph{two-type annihilating
	random walk}
studied by Bramson and Lebowitz~\cite{bramson1991asymptotic,BL1991,BL2001}, 
in which particles of the same type do not interact, but 
two particles of different types mutually annihilate.

The introduction of stationary particles (i.e., spots), makes
the question of survival of moving particles (i.e., cars) in our 
annihilating process resemble the notoriously difficult question of fluctuation in \emph{ballistic annihilation}. In this process
on $\mathbb{Z}$, each site is initially a particle (resp., a blockade)
independently with probability $p$ (resp., $1-p$). Each particle is also independently assigned a direction, 
either left or right, with equal probability.   Particles then move deterministically 
at unit velocity in their assigned directions and annihilate if they meet another particle or blockade. 
If every blockade is destroyed almost surely we say the process \emph{fluctuates}, otherwise it \emph{fixates}.
It is conjectured in \cite{ballistic} that for $p> 3/4$ this process fluctuates. This is worked out in a nearly rigorous way in \cite{b9}, but  the argument does not yield any probabilistic intuition. Recent papers \cite{arrows,bullet} 
prove results that imply fixation for $p< 2/3 + \epsilon$ for a small $\epsilon>0$, but currently nothing is known about the fluctuation phase, not even whether it exists for some $p<1$.

A related process is the \emph{meteor model}, introduced by Benjamini about ten years ago (see \cite{CRW}). Fix an 
$\epsilon >0$ and place $\epsilon$-balls in Euclidean space with centers at a unit intensity Poisson process. Each of these 
balls is assigned a uniformly random direction, and proceeds to
move deterministically along this direction at unit speed. Like in ballistic annihilation, when meteors collide they mutually annihilate. It remains an open problem, in any dimension $d \geq 2$, to prove that the origin is a.s. occupied by infinitely many meteors \cite[Conjecture 3.4]{CRW}.

The difficulty with models like parking, ballistic annihilation, and the meteor model is that the long-time behavior of different particles is highly correlated: knowing a particle has yet to be annihilated depends in a meaningful way on all particles in a growing neighborhood. For example, if a meteor has survived up to time $t$, then many meteors in front of it must have been destroyed earlier. This requires intricate coordination between a compounding number of meteors. Similarly, if a car $c$ is unparked at time $t$ in the parking model, then every parking spot on the trajectory of $c$  must have been occupied before $c$ arrived. It is not immediately clear how this affects the likelihood of having
spots or cars near $c$. One  might  guess  that $c$ needed many  cars  nearby  because  it  has  survived  so  long,  but,  as  time  goes  on,  these cars  protect $c$ by  parking,  and  so 
$c$ may  become  more  and  more  isolated.
The  complicated  dependence  structure,  especially  at  criticality ($p=1/2$),  is evident  in  the  clustering   of  cars  and  unparked  spots we  see  in  simulations (see  Figure~\ref{fig_evol}).

\begin{figure}[htb!]
	\centering
	\includegraphics[trim=0cm 0cm 0cm 0cm,clip, width=0.32\textwidth]{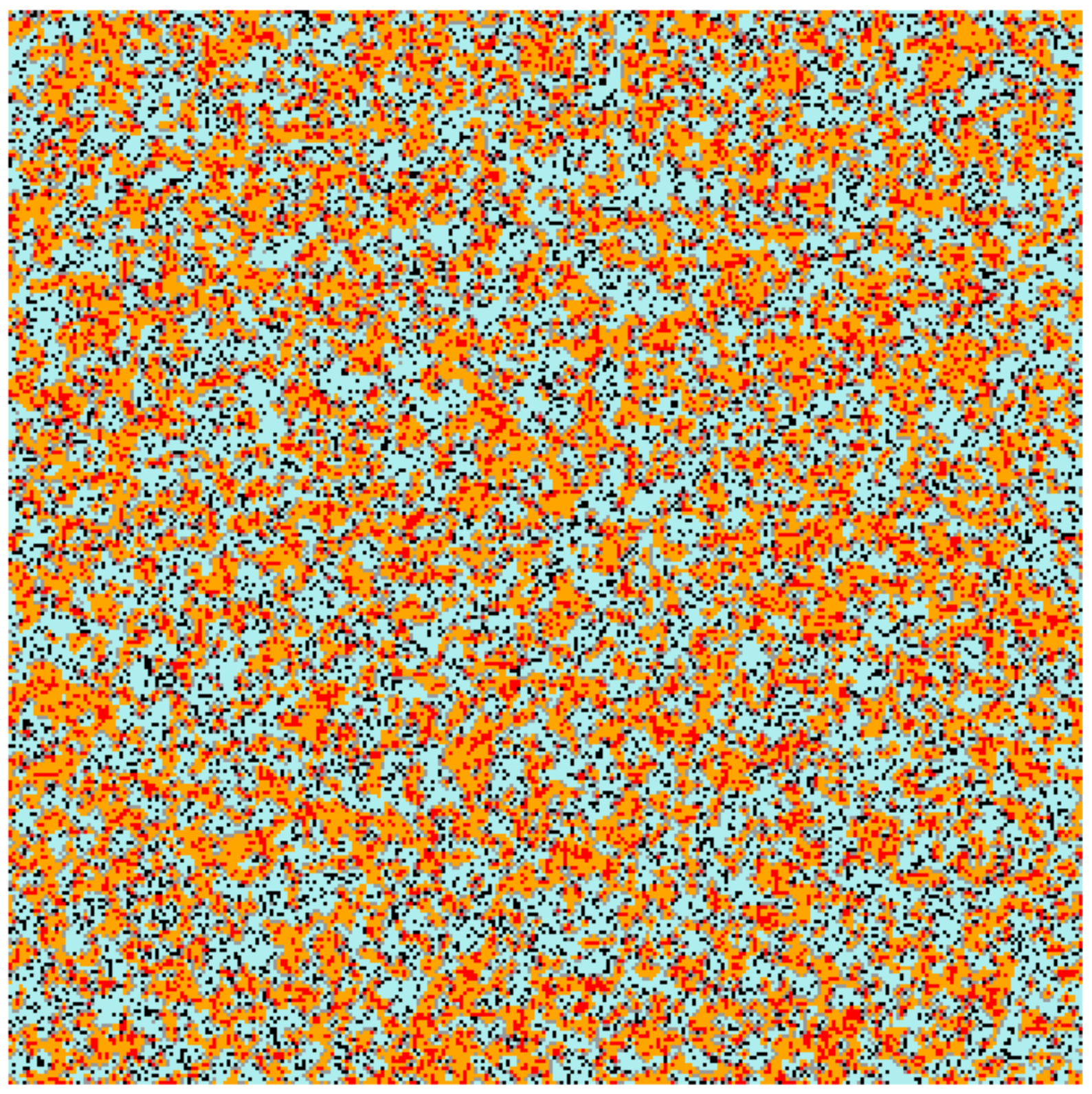}
	\includegraphics[trim=0cm 0cm 0cm 0cm,clip, width=0.32\textwidth]{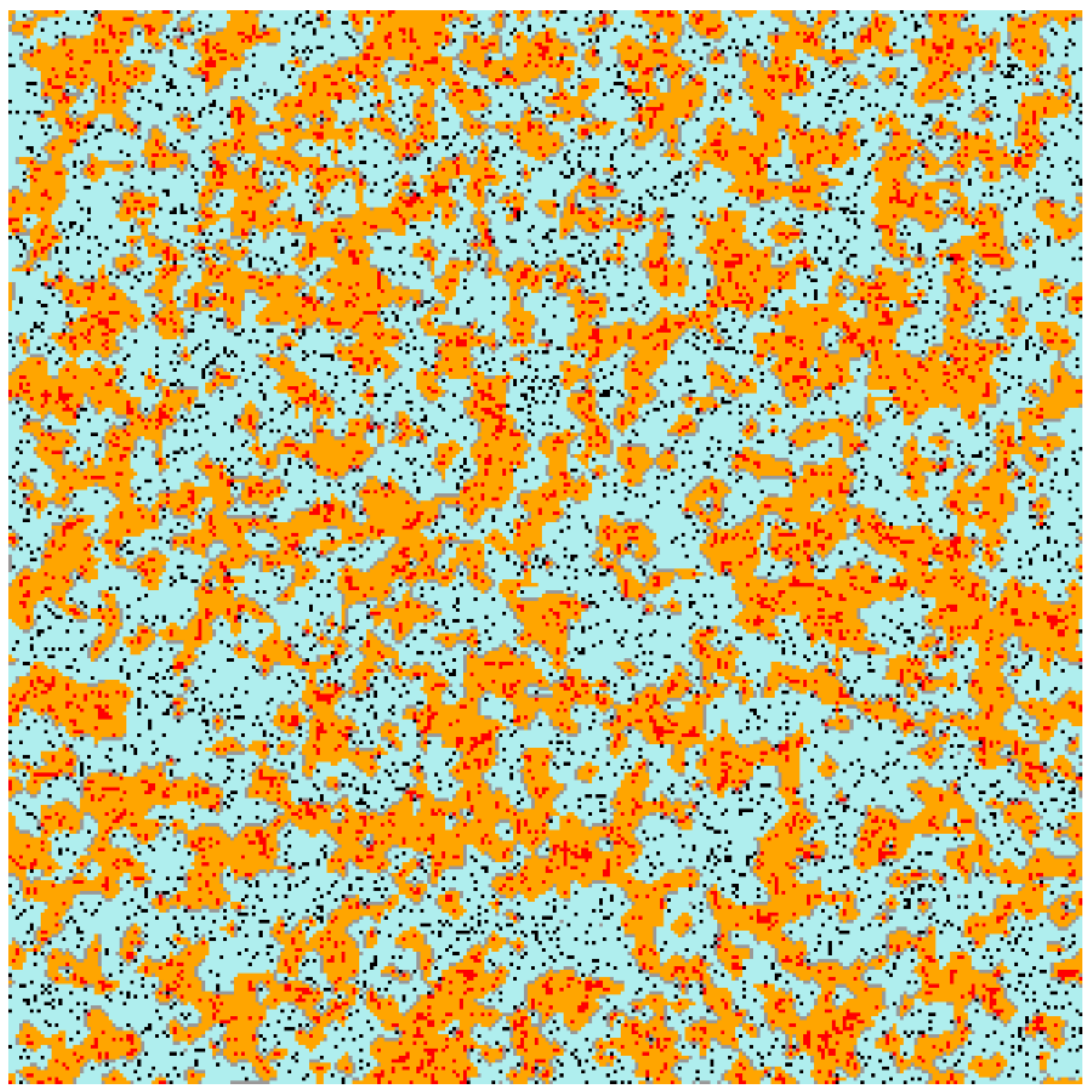}
	\includegraphics[trim=0cm 0cm 0cm 0cm,clip, width=0.32\textwidth]{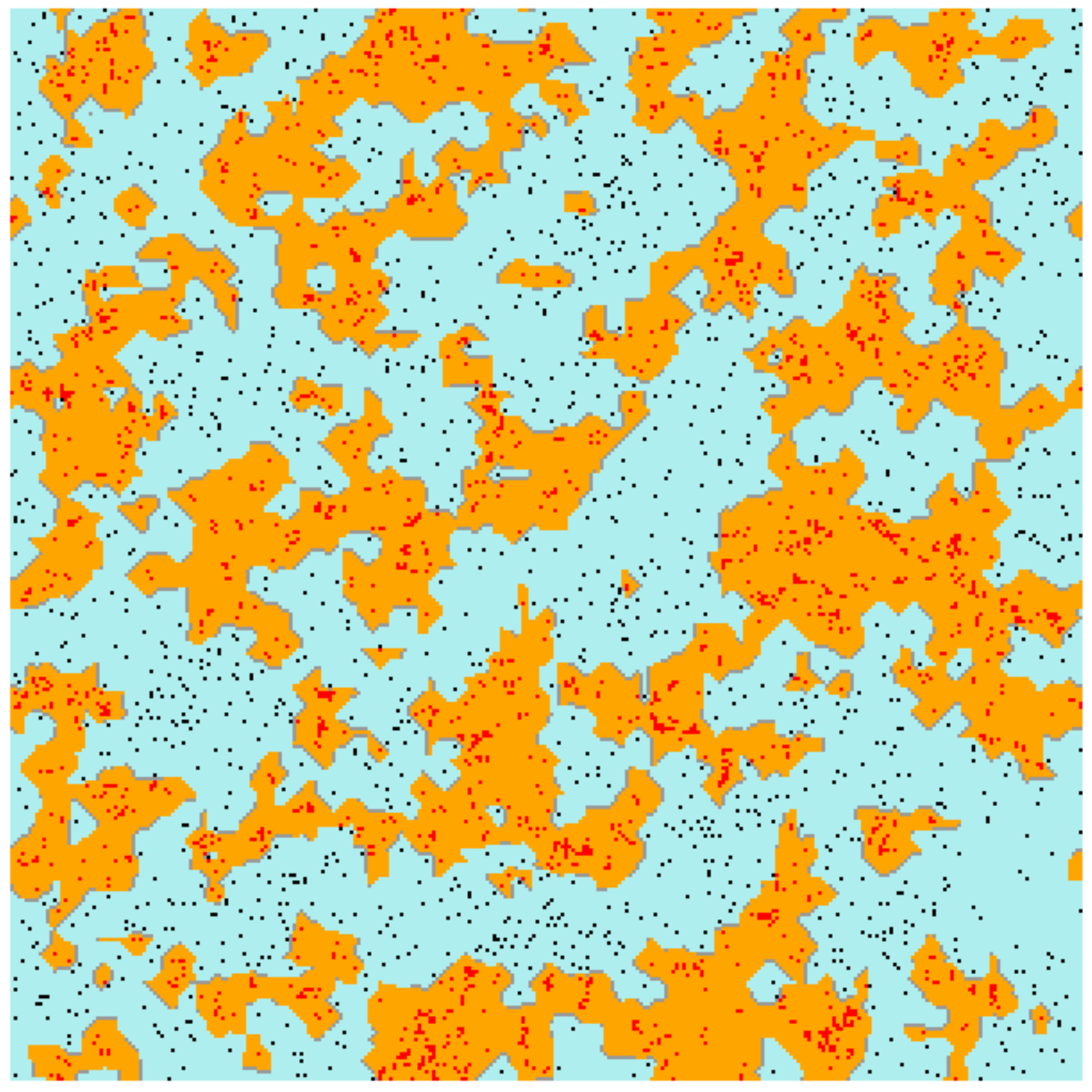}
	\caption{\label{fig_evol} 
		Evolution of the critical ($p=1/2$) dynamics on the unoriented two-dimensional lattice, 
		represented by the $300\times300$ square with periodic boundary. 
		Times $10$, $50$ and $300$ are depicted, 
		and any empty site whose closest non-empty site is a car (resp.~a parking spot) is colored blue (resp.~orange), with increasingly rare ties colored grey.}
\end{figure}

\subsection{Related work} 
%In this subsection, we discuss some of the related works in the literature.  

Goldschmidt and Przykucki study a similar process on trees in \cite{tree}. Let $\Poi(\alpha)$ denote a Poisson distribution with mean $\alpha$. Their main result concerns rooted Galton-Watson trees with a $\Poi(1)$ offspring distribution, oriented so that all edges point towards the root, and therefore all random walks move at each step closer to the root. Every vertex is initially a parking spot. Place $\Poi(\alpha)$ cars at each site. A car parks if it arrives at an available spot (breaking ties uniformly at random). If a car arrives at an occupied spot, it continues towards the root until it finds a spot or drives off the tree through the root. %denoted by $\root$. 

Let $X$ be the total number of cars that never find a spot. Since the tree (and therefore $X$) is almost surely finite, it is more natural to consider the expected value of $X$. In \cite[Theorem 1.2]{tree}, the authors prove that $\E X$ undergoes a sharp phase transition: $\E X = \infty$ for $\alpha > 1/2$, and $\E X \leq 1$ for $\alpha \leq 1/2.$ They also study $X$ when the critical Galton--Watson tree is conditioned to be infinite. In \cite[Theorem 1.3]{tree} they prove that,  for $\alpha < 1/2$, with probability $q_\alpha = \sqrt{1-2\alpha}(1-\alpha)$ every car finds a parking spot. Note that $\alpha = 1/2$ is not the exact analogue of $p=1/2$ in the parking process we consider here. Because \cite{tree} has every site initially a parking spot, the probability that a parking spot is empty after placing cars is $e^{-\alpha}$, and the expected number of cars at a site is $\E(\Poi(\alpha) -1)^+ = e^{-\alpha}+\alpha-1$. These two quantities are not equal at $\alpha =1/2$; in fact, there are many more vacant spots than cars. We will see that this imbalance comes from the fact that the oriented trees they consider are not unimodular. Goldschmidt and Przykucki's work gives a probabilistic intuition for an analogous phase transition established by  Lackner and Panholzer~\cite{uniform_parking}. There is also another class of parking models (also called random sequential adsorption), in which cars arrive by external deposition and do not move within $G$. See, for example,~\cite{DFK2008}.

%In \cite{uniform_parking}, Lackner and Panholzer consider a parking process on directed uniformly random labeled trees on $n$ vertices that closely resembles the work of Konheim and Weiss. Every vertex is initially a parking spot. Place $m\lceil \alpha n\rceil$ cars uniformly at random, and have them attempt to park as before. They show that when $\alpha \geq 1/2$ the probability every car finds a parking spot goes to 0 (as $n \to \infty$). For $\alpha< 1/2$ this probability converges to $q_\alpha$. Uniformly random trees are well-approximated by critical Galton-Watson trees. In fact, the result for uniformly random trees is a corollary of \cite[Theorem 1.3]{tree}.

For \cite{tree} and \cite{uniform_parking}, the  randomness comes from the environment and the initial placement of cars. Since the underlying graph is a directed tree, the cars follow deterministic paths. By contrast, in our parking process on graphs cars decide where to move according to a common transition kernel $K$. Depending on this kernel, the resulting car trajectories   may be deterministic or random. Our main result, Theorem \ref{thm:main},
states that \thref{thm:main_lattice} holds in general if the kernel $K$ has certain transitivity and unimodularity properties.

\subsection*{Organization of paper}
We give a more precise definition of the model and our results in Section \ref{section:definitions}. We give some examples and state open questions in Section \ref{section:simulations and further questions}. The proofs of our main results---\thref{thm:main} and \thref{thm:prob}---are given in Section \ref{sec:main}. 

\subsection*{Note added in the revision}
As we prepared the revision of this paper, we became aware of the recent paper \cite{CRS2018}, which contains a recurrence theorem for a continuous time version of the model (similar to the first half of Theorem~\ref{thm:main}(i)). Their model allows for two types of moving particles, and when type-B particles are stationary \cite[Section 4.4]{CRS2018}, it resembles our model.

\vspace{0.3cm}
\section{Definitions and statements of results}
\label{section:definitions}

Let $G=(\mathcal{V},\mathcal{E})$ be a graph, where $\mathcal{V}$ is a set of vertices and $\mathcal{E}\subseteq \mathcal{V}^{2}$ is a set of edges. We assume that $G$ is a locally finite, connected, simple graph, and we distinguish an arbitrary vertex $\root\in \mathcal{V}$ for purposes of stating some of our results. For $v\in \mathcal{V}$, let $N(v)$ be the set of all neighbors of $v$ in $G$. For $x,y\in \mathcal{V}$, we let $\mathrm{dist}(x,y)$ denote shortest path distance between $x$ and $y$. Let $\B(x,t) = \{v\in G \colon \mathrm{dist}(x,v)\le t\}$ denote the ball of radius $t$ centered at $x$, and let $\ind{\cdot}$ be an indicator random variable.

\sloppy The parking process is defined on a pair $(G,K)$, where $K$ is a \textit{(Markov) kernel} on $G$, which is a function $K\colon\mathcal{V}\times \mathcal{V}\rightarrow [0,1]$ such that $K(u,v)=0$ if $(u,v)\notin \mathcal{E}$ and $\sum_{v\in N(u)} K(u,v)=1$. For each $u,v\in \mathcal{V}$, we say $u$ is \textit{accessible} from $v$ if there exists a sequence $v=x_{0}, x_{1},\hdots, x_{n}=u$ of adjacent nodes such that $\prod_{i=0}^{n-1} K(x_{i},x_{i+1})>0$.

Initially, each $v\in \mathcal{V}$ is assigned an unparked car, written as $(v,\texttt{unparked})$, independently with probability $p$. Unassigned sites are parking spots that can fit one car. Unparked cars simultaneously perform independent discrete time random walks according to the kernel $K$ until they find an available spot. When multiple unparked cars move to the same parking spot, each car generates a uniform $[0,1]$ variable. The car with smallest value parks and the others remain unparked.

Our probability space is 
\begin{equation}\label{eq: Omega_def}
\Omega:=  \big( \{-1,1\} \times(\mathcal{V}^\mathbb{N}) \times ([0,1]^\mathbb{N}) \big)^{\mathcal{V}}
\end{equation}
with probability measure $\P_p$ under which all coordinates are independent, and for each $v\in \mathcal{V}$ the three independent components are distributed as follows. The first coordinate is a random variable with probability $p$ to be 1 (if there is a car initially at $v$) and probability $1-p$ to be $-1$ (otherwise). The second is a random walk started at $v$ with transition kernel $K$, which is the path that an unparked car placed at $v$ will follow (this path continues past the parking time). The third is a sequence of i.i.d.\ uniform $[0,1]$ random variables to break ties if multiple cars arrive at the same parking spot.

If $j \in \mathcal{V}$ initially has a car, we say that it \emph{visits} site $v$ at time $t\ge 1$ if $(j,\texttt{unparked})$ is at some neighbor $u\in N(v)$ at time $t-1$ and moves to $v$ at time $t$. Define
\begin{equation*}
V^{(v)}_{t} = \sum_{s=1}^{t} \# \{ j\in \mathcal{V}\colon \text{car $j$ visits site $v$ at time $s$}  \},
\end{equation*}   
and let $V^{(v)} := \lim_{t\rightarrow \infty} V^{(v)}_{t}$ be the total number of visits to $v$.  When the $V^{(v)}_t$ have the same distribution for all $v$, we write $V_t$ and $V$ for random variables with the same distributions as $V^{(v)}_t$ and $V^{(v)}$.
%For example, this is the case on transitive graphs.  

The canonical example of our theorem is the parking process with simple random walk on $\mathbb Z^d$ from \thref{thm:main_lattice}. In this setting we have translation invariance, a mass-transport principle by which we are able to change perspectives between cars and spots, and nice ergodicity properties. These essential features are shared by a broader class of graphs that includes regular trees and Cayley graphs. We describe them in more detail now.

Denote by $\Aut(G)$ the group of all automorphisms of the graph $G$. Let $\Aut_{K}(G)$ be the subgroup of $\Aut(G)$ consisting of all $K$-preserving automorphisms; that is 
\begin{equation*}
\Aut_{K}(G) = \{ \varphi\in \Aut(G) \colon K(u,v)=K(\varphi (u), \varphi (v) ) \quad \forall u,v\in \mathcal{V}  \}. 
\end{equation*}
Given a subgroup $\Gamma_{K}\le \Aut_{K}(G)$ of $K$-preserving automorphisms of $G$, for each $u,v\in \mathcal{V}$, denote $\Gamma_{K}(u,v)=\{ \varphi\in \Gamma_{K}\colon \varphi(u)=v \}$. We define the following conditions on the triple $(G,K,\Gamma_{K})$:
\begin{description}
	\item{1. (transitivity)}  $(G,K,\Gamma_{K})$ is \textit{transitive} if  $\Gamma_{K}(u,v)$ is nonempty for each $u,v\in \mathcal{V}$.
	\item{2. (unimodularity)} $(G,K,\Gamma_{K})$ is \textit{unimodular} if for each $u,v\in \mathcal{V}$, 
	\begin{equation*}
	|\Gamma_{K}(u,u)v|=|\Gamma_{K}(v,v)u|<\infty.
	\end{equation*}
	\item{3. (infinitely accessibility)} $(G,K,\Gamma_{K})$ is \textit{infinitely accessible} if there exist $\varphi \in \Gamma_K$ and $u\in \mathcal{V}$ such that $\{\varphi^{n}(u)\,:\, n\ge 0\}$ is infinite and $u$ is accessible from $\varphi(u)$.
\end{description} 

Any subgroup $\Gamma_{K}\le \Aut_{K}(G)$ of $K$-preserving automorphisms on $G$ acts on our probability space naturally by shifting all vertex labeled variables; that is, for each $\varphi\in \Gamma_K$ and $\omega\in \Omega$, $\varphi(\omega)(v) := \omega(\varphi^{-1}(v))$. Note that if $(G,K, \Gamma_{K})$ is transitive, then the parking process on $(G,K)$ is invariant under this action in law. In particular, for each fixed $t\ge 0$, the law of $V^{(v)}_{t}$ does not depend on $v\in \mathcal{V}$. This is crucial to obtain a recursive relation for $\E V_{t}$ (see Proposition~\ref{prop:rde}).

Duality between parking spots and cars on $\mathbb{Z}^{d}$ is obtained in a form of the well-known mass-transport principle, which holds in general for unimodular graphs. Our definition of unimodularity for the triple $(G,K,\Gamma_{K})$ is adapted from the standard unimodularity defined for Bernoulli percolation (see \cite{lyons2016probability}) in order to respect the parking process. For instance, this property is trivially satisfied when $G$ is a Cayley graph and $K$ is uniform (see Example \ref{ex:burnside} for more detail).

A key ingredient for both parts of Theorem \ref{thm:main}, is a $0$-$1$ law for $V$. This relies on the existence of an ergodic transformation $\varphi\in \Gamma_{K}$. Furthermore, in order to show $\E V<\infty$ for small $p$, we need to estimate the expected exit time of a random walk from a ball of fixed radius. The infinite accessibility for $(G,K,\Gamma_{K})$ is sufficient to make these arguments work. Note that in the definition of infinite accessibility, $u$ is accessible from all $\varphi^{n}(u)$ since $\varphi \in \Gamma_{K}$. We remark that there exists a transitive and unimodular but not infinitely accessible triple $(G,K,\Gamma_{K})$ where $G$ is infinite and locally finite. (see Example \ref{ex:burnside}).

We briefly sketch the argument that any $\varphi\in \Gamma_{K}$ in the definition of infinite accessibility must be ergodic. Take an event $A$ which is $\varphi$-invariant, approximate it by a cylinder event $B$, and choose any $k$ so that $\varphi^k B$ and $B$ depend on disjoint sets of variables and thus are independent (this is possible because all vertices have infinite orbits under $\varphi$). Since $B$ approximates $A$ and $\varphi^k B$ approximates $\varphi^k A = A$, $A$ is independent of itself.

Now we state the general version of \thref{thm:main_lattice} and also give a theorem that describes the probabilities that cars and spots remain unparked for all time.

\begin{thm}\thlabel{thm:main}
	Let $G$ be a locally finite graph with a kernel $K$, and let $\Gamma_K$ be a subgroup of $\Aut_K(G)$. If $(G,K,\Gamma_{K})$ is transitive, unimodular, and infinitely accessible, then the following hold.
	\begin{description}
		\item[(i)]  If $p \ge 1/2$, then $V$ is infinite almost surely. Moreover,  $\E V_t = (2p-1)t + o(t)$ as $t \to \infty$.
		\vspace{0.1cm}
		\item[(ii)] If $p<1/2$,  then $V$ is finite almost surely. Moreover, if $p$ is sufficiently small, then $\E V<\infty$. 
		%$(G,K)$ is infinitely escapable (instead of being infinitely accessible) and 
	\end{description}		
\end{thm}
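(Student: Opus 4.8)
The plan is to treat the two halves of each part separately, since the qualitative statements (almost‑sure infinitude versus finiteness of $V$) come from a $0$–$1$ law plus a first‑moment computation, while the sharp asymptotic $\E V_t = (2p-1)t + o(t)$ and the $\E V < \infty$ bound for small $p$ require quantitative input about the random walk.

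\medskip
\textbf{The $0$–$1$ law and the dichotomy.} First I would establish that $\{V = \infty\}$ is a $\varphi$‑invariant event for the ergodic automorphism $\varphi \in \Gamma_K$ furnished by infinite accessibility, so by the ergodicity sketch given above $\PP_p(V = \infty) \in \{0,1\}$. To decide which value occurs, I would compute $\E V_t$ via the mass‑transport / duality lemma (Lemma~\ref{lemma:mass_transport}) and the recursion for $\E V_t$ (Proposition~\ref{prop:rde}). The heuristic is conservation of ``mass'': each visit to the root either results in a car parking (destroying a spot) or in a car passing through. Counting net annihilations, the expected number of cars minus spots that have been ``used up'' in a ball should be $(2p-1)$ times its volume, and transitivity turns this into the statement $\E V_t \geq (2p-1)t + o(t)$, with a matching upper bound of the same order. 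When $p \geq 1/2$ this forces $\E V_t \to \infty$, and combined with the $0$–$1$ law (and an argument that $V_t$ cannot both have growing mean and be a.s. bounded, e.g.\ via a uniform‑integrability or second‑moment step, or more simply: if $V < \infty$ a.s.\ then by transitivity and Fatou every site is visited finitely often, contradicting the linear lower bound on $\E V_t$ after a transport to cars) one gets $V = \infty$ a.s. When $p < 1/2$, a first‑moment bound showing $\E V_t$ stays bounded — or at least that $\PP_p(V = \infty) < 1$ — collapses via the $0$–$1$ law to $V < \infty$ a.s.

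\medskip
\textbf{The quantitative asymptotic.} For the $o(t)$ error term in part (i), I would iterate the recursion from Proposition~\ref{prop:rde} and show the correction is sublinear; the main point is subadditivity/monotonicity of $t \mapsto \E V_t - (2p-1)t$ together with the fact that the increment $\E V_{t+1} - \E V_t$ is the probability the root is visited at time $t+1$, which one argues converges to $2p-1$. This is where the annihilating structure must be controlled: the density of unparked cars at the root at large times should converge, and the mass‑transport duality identifies the limiting visit rate with the limiting density imbalance $2p-1$.

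\medskip
\textbf{Finiteness of $\E V$ for small $p$.} For the second half of part (ii), the strategy is a union bound over cars: $\E V \leq \sum_{j} \PP_p(\text{car } j \text{ ever visits } \root) \cdot (\text{expected number of such visits})$, controlled by the event that car $j$ survives (stays unparked) long enough to reach the root. A car at distance $r$ must pass through $\sim r$ initially‑empty sites without parking, each of which must have been occupied by the time that car arrives; crudely bounding the probability that a given site is a spot yet unavailable, and using that the walk needs time at least of order $r$ (hence has to survive that long) to get near the root, yields a bound that decays geometrically in $r$ once $p$ is small enough to beat the combinatorial growth of the number of candidate cars and of the walk's range. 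The expected exit time from a ball of fixed radius — finite by infinite accessibility — is what lets one say a car spends only $O(1)$ expected time near the root per ``attempt.'' Summing the geometric series gives $\E V < \infty$; tracking constants on $\bZ^d$ produces the explicit threshold $p < (256 d^6 e^2)^{-1}$ in \thref{thm:main_lattice}.

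\medskip
\textbf{Main obstacle.} I expect the hardest step to be the upper bound $\E V_t \le (2p-1)t + o(t)$ together with ruling out $V<\infty$ a.s.\ when $p \ge 1/2$: unlike annihilating random walk there is no time‑reversal dual, so one cannot simply read off the origin density from a voter model, and the dependence between ``car $c$ is still unparked'' and the configuration in a large neighborhood is genuinely delicate (the clustering visible in Figure~\ref{fig_evol}). The mass‑transport duality of Lemma~\ref{lemma:mass_transport} is presumably the device that sidesteps having to understand these correlations directly, by equating an expectation computed from the cars' side with one computed from the spots' side; making that exchange rigorous, and extracting the exact constant $2p-1$ rather than just two‑sided bounds, is the crux.
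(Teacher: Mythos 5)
Your skeleton (the $0$--$1$ law from ergodicity, the mass-transport duality, and the recursion of Proposition~\ref{prop:rde}) matches the paper's, but there are several concrete gaps. The most serious is the critical case $p=1/2$: your route to $V=\infty$ a.s.\ is ``$\E V_t \ge (2p-1)t + o(t)$ forces $\E V_t \to \infty$,'' which gives nothing at $p=1/2$ since the lower bound degenerates to $o(t)$. The paper's argument instead keeps the extra term in the recursion, $\E V_{t+1} \ge \E V_t + (1-p)\P(V_t=0)$, and runs a dichotomy: either $\P(V_t=0)\to 0$, in which case $\P(V=0)=0$ and a separate FKG/monotonicity lemma (\thref{lem:PV}) upgrades this to $V=\infty$ a.s.; or $\P(V_t=0)$ stays bounded below, in which case $\E V_t$ grows linearly \emph{even at} $p=1/2$ and Paley--Zygmund applies via the second-moment bound $\E V_t^2 \le p(p+1)t^2$ (itself proved by dominating by non-interacting walks and mass transport). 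Relatedly, your ``more simply'' alternative --- that $V<\infty$ a.s.\ contradicts linear growth of $\E V_t$ --- is false as stated: $V$ can be a.s.\ finite while $\E V_t \uparrow \E V = \infty$. The second-moment route you mention in passing is not optional; it is the mechanism.

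Two further gaps. For the qualitative half of (ii), you propose showing ``$\E V_t$ stays bounded'' for $p<1/2$, but that is not achievable with these techniques (the paper only proves $\E V<\infty$ for small $p$ and poses boundedness for all $p<1/2$ as an open question); the actual mechanism is the duality of \thref{lem:relate}, which gives $\P[V>0 \mid \root \text{ is a spot}] \le p/(1-p) < 1$, hence $\P(V=0)>0$, hence $\P(V=\infty)=0$ by the $0$--$1$ law. For the small-$p$ bound, your union bound hinges on ``each spot on the car's path must have been occupied by the time the car arrives,'' but that event depends on the entire evolution and is not amenable to a direct product bound; the paper's key device is the deterministic ``busy set'' lemma (Lemma~\ref{busy_subgraph}): a car surviving to time $t$ forces the existence of a connected set in $\mathbb{B}(v,2t)$ containing its trajectory with at least as many initial cars as spots. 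This converts survival into a Peierls-type event whose probability is controlled by a Chernoff bound over connected subgraphs, combined with the exit-time generating function estimate (which you did correctly identify as the role of infinite accessibility). Without the busy-set reduction, the first-moment computation does not close.
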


The proof of Theorem \ref{thm:main} hinges on a recursive distributional equation (Proposition~\ref{prop:rde}) that expresses $V_{t+1}$ in terms of the number of visits to its neighboring sites, and a duality between parking spots and cars coming from the mass-transport principle (Lemma \ref{lemma:mass_transport}).

We also specify the probability that a given car eventually parks, and that a parking spot remains vacant forever. 
For the sake of concision, we say that a car \emph{parks} if it eventually parks, and we say that a spot is \emph{parked in} if a car eventually parks in that spot.
\begin{thm} \thlabel{thm:prob} 
	Let $(G,K,\Gamma_K)$ 
	%and $(\xi_{t}^{p})_{t\ge 0}$ 
	be as in \thref{thm:main}. Then,
	\begin{align}
	\P[\text{car initially at $\root$ parks}\,|\, \text{$\root$ has a car initially} ] =  \begin{cases} \f{1-p}{p}, & p > 1/2 \\	
	1, & p \leq 1/2
	\end{cases} \label{eq:car}, \\
	\P[\text{spot at $\root$ is parked in}\,|\, \text{$\root$ is a parking spot} ] 
	= \begin{cases}
	1, & p \geq  1/2\\
	\f{p}{1-p}, & p < 1/2 
	\end{cases}\label{eq:spot}.
	\end{align}
	
\end{thm}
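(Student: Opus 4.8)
The plan is to derive both identities from \thref{thm:main} together with the spot–car duality coming from the mass-transport principle (Lemma \ref{lemma:mass_transport}) and a conservation-of-mass argument, carefully accounting for the three regimes $p>1/2$, $p=1/2$, and $p<1/2$. The two statements are mirror images of each other under the (non-measure-preserving) swap $p \leftrightarrow 1-p$ that interchanges the roles of cars and spots, so I would prove one carefully—say \eqref{eq:car}—and then indicate how the same bookkeeping yields \eqref{eq:spot}.

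First I would set up the local accounting at the root. By transitivity, the events ``$\root$ starts as a car'' and ``$\root$ starts as a spot'' have probabilities $p$ and $1-p$, and all the relevant quantities ($V$, the indicator that $\root$'s car parks, the indicator that $\root$'s spot is parked in) are identically distributed over $\mathcal V$. The key observation is a matching identity: every car that parks does so in exactly one spot, and every spot that is parked in receives exactly one car. Writing $A = \mathbf 1\{\root \text{ starts as a car and that car parks}\}$ and $B = \mathbf 1\{\root \text{ starts as a spot and is parked in}\}$, a mass-transport argument—send unit mass from each parked car to the spot it occupies—gives $\E A = \E B$ via Lemma \ref{lemma:mass_transport}, since the transport is automorphism-equivariant and each vertex sends and receives at most one unit. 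Hence
\begin{equation*}
p \cdot \P[\text{car at }\root\text{ parks} \mid \root \text{ has a car}] = (1-p)\cdot \P[\text{spot at }\root\text{ is parked in} \mid \root \text{ is a spot}].
\end{equation*}
This single relation ties \eqref{eq:car} and \eqref{eq:spot} together; it remains to pin down one unconditional probability in each regime.

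For $p \le 1/2$ I would argue that \emph{every} car parks almost surely, which gives the value $1$ in \eqref{eq:car} directly and then $p/(1-p)$ in \eqref{eq:spot} from the displayed relation. The cleanest route is the $\E V_t = (2p-1)t + o(t)$ asymptotics from \thref{thm:main}(i) at $p=1/2$, giving $\E V_t = o(t)$, combined with the fact that an unparked car at time $t$ forces a visit to some site at each subsequent step; more precisely, if a positive-probability set of cars never parked, the expected number of visits through a fixed vertex would grow linearly (each surviving car contributes visits at a positive rate to the vertices on its trajectory, and by transitivity these contributions are spread uniformly), contradicting sublinearity of $\E V_t$. For $p<1/2$ one can instead invoke \thref{thm:main}(ii): $V$ is finite a.s., so only finitely many car-visits ever occur near any vertex, and an unparked surviving car would again produce infinitely many visits somewhere along its path; unimodularity lets one transport this contradiction back to the root. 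For $p>1/2$ I would run the complementary argument: now \emph{every spot is parked in} almost surely (by the same reasoning applied to the dual process, or by noting that $V$ infinite a.s. forces each spot eventually to be visited by an unparked car), so the right-hand side of the displayed relation equals $1-p$, whence $\P[\text{car at }\root\text{ parks}\mid\cdots] = (1-p)/p$ and $\P[\text{spot parked in}\mid\cdots]=1$.

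The main obstacle is the implication ``$\E V_t$ sublinear (or $V$ finite) $\Rightarrow$ every car parks a.s.'' — that is, converting a global counting statement about visits into the statement that no car survives forever. The subtlety is that a single surviving car could in principle wander through a very sparse region, contributing few visits to any particular vertex; one must use transitivity/unimodularity to argue that the expected visit count at the root is at least a constant times the expected density of surviving cars times time, so that a positive survival probability is incompatible with $\E V_t = o(t)$. I expect this to require the same mass-transport bookkeeping as above—transporting, from each site currently holding a surviving unparked car, mass along its future trajectory—to show $\E V_t \ge c\, t\, \P[\root\text{ has a surviving car at time }0]$ up to lower-order corrections. Handling the exact critical case $p=1/2$ (where the relation forces $\P[\text{spot parked in}]=1$ and $\P[\text{car parks}]=1$ simultaneously, consistent with equal densities) is where the $o(t)$ term, rather than a crude linear bound, is genuinely needed.
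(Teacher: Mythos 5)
Your overall architecture matches the paper's: the mass-transport identity $p\,\P[\text{car at the root parks}\mid\text{car}] = (1-p)\,\P[\text{spot at the root is parked in}\mid\text{spot}]$ is exactly the paper's Lemma \ref{lem:relate}, and for $p\ge 1/2$ the paper, like you, deduces from $\P[V=\infty]=1$ that every spot is a.s.\ visited by an unparked car and hence parked in, then reads off $(1-p)/p$ for the car side. Your $p=1/2$ argument (sublinearity of $\E V_t$ forces $\P[\tau=\infty]=0$ via a trajectory mass transport) is correct --- it is essentially Proposition \ref{prop:EV_t and survival prob.}, which gives $\E V_t=\sum_{s\le t}\P[\tau\ge s]\ge t\,\P[\tau=\infty]$ --- but it is redundant: once every spot is parked in for $p\ge1/2$, the duality relation already yields $\P[\text{car parks}\mid\text{car}]=(1-p)/p=1$ at $p=1/2$.

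The genuine gap is your treatment of $p<1/2$ in \eqref{eq:car}. You argue that a car surviving forever ``would produce infinitely many visits somewhere along its path,'' contradicting $V<\infty$ a.s. This fails on transient graphs (e.g.\ $\Z^d$ with $d\ge3$): a single immortal car visits each fixed vertex only finitely often, so its survival is perfectly compatible with $V<\infty$ a.s. The mass-transport version of your argument only gives $\E V_t\ge t\,\P[\tau=\infty]$, hence $\E V=\infty$ --- but Theorem \ref{thm:main}(ii) guarantees $\E V<\infty$ only for sufficiently small $p$ (the paper explicitly leaves finiteness of $\E_p V$ for all $p<1/2$ as an open question), so for $p$ just below $1/2$ there is no contradiction available from this route. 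The paper closes this case differently and more simply: the value $1$ at $p=1/2$ propagates to all $p<1/2$ by monotonicity (Proposition \ref{prop:monotonicity}) --- under the standard coupling, removing cars can only shorten the lifetime $\tau$ of the car at the root, so $\P_p[\tau<\infty\mid\text{root has a car}]\ge\P_{1/2}[\tau<\infty\mid\text{root has a car}]=1$. With that substitution your proof is complete; \eqref{eq:spot} for $p<1/2$ then follows from the duality relation exactly as you say.
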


The case $p=1/2$ is especially interesting.  Note that \thref{thm:main} says $\E V_t$ grows linearly in $t$ for $p>1/2$, but sublinearly for $p=1/2$.  Moreover, \thref{thm:prob} says that when $p=1/2$, despite the fact that $V$ is infinite, every car finds a parking spot, and every parking spot is parked in. This implies that some cars drive a very long distance to find a spot.

The phase transition at $p=1/2$ does not occur on asymmetric graphs. Indeed, \cite[Proposition 3.5]{tree} shows that on the directed binary tree the critical probability is in the interval $[1/64,1/4] \approx [.02,.25]$; see also Example~\ref{example3}. Moreover, on graphs with rapid enough degree expansion, there is no phase transition. For example, consider the tree where each vertex at distance $n$ from the root has degree $2^n$. Even when $p=1$ the induced drift is so strong that only finitely many cars will visit the root.

\vspace{0.3cm}
\section{Examples and Further questions}
\label{section:simulations and further questions}

In this section, we discuss some examples of triples $(G,K,\Gamma_{K})$ for which our main theorems apply. We also state a few further questions. 
%Our prime examples are the parking processes on lattices and regular trees.

\begin{example}[Unoriented lattices and regular trees]\label{example1}
	Let $G=(\mathcal{V},\mathcal{E})$ be either the $d$-dimensional integer lattice $\mathbb{Z}^{d}$ with nearest neighbor edges and the $(d+1)$-regular tree $\Thom$. Let $K$ be the uniform kernel on $G$; that is, $K(u,v)=K(u,w)$ if $v,w\in N(u)$.  Lattices and regular trees both have a natural notion of translations, so we let $\Gamma_{K}$ be the subgroup of all translations on $G$ (recall that we don't require $\Gamma_{K}$ to be the collection of all automorphisms). Clearly $(G,K,\Gamma_{K})$ is transitive; since translations have no fixed points, unimodularity follows trivially; powers of a single translation applied to any vertex generates an infinite ray, so infinite accessibility also holds. Hence \thref{thm:main} applies to $\mathbb{Z}^{d}$ and $\Thom$ with uniform kernel. See Figure \ref{fig:sim_lattice} for simulation results for the symmetric parking process on $\mathbb{Z}^{2}$. For this case, the bound for $\E_{p} V<\infty$ in \thref{thm:main_lattice} (ii) is $p<2^{-14}e^{-2}\approx 8.2602\times 10^{-6}$. $\blacktriangle$
\end{example}

\begin{example}[Oriented lattices]\label{example2}
	%Let $\Zo$ be the oriented $d$-dimensional integer lattice where the orientation is in the negative coordinate direction. In this example we consider the parking process on $\Zo$ with symmetric random walks. 
	Let $G$ be the $d$-dimensional integer lattice $\mathbb{Z}^{d}$ with nearest neighbor edges. Let $(e_i)_{i=1}^d$ be the standard basis for $\mathbb{Z}^{d}$, and define a kernel $K$ on $\mathbb{Z}^{d}$ by $K(x,x-e_{i})=1/d$ for each $1\le i \le d$ and $K\equiv 0$ otherwise. Since usual translations on $\mathbb{Z}^{d}$ preserves this oriented kernel $K$, we let $\Gamma_{K}$ be the group of all translations of $\mathbb{Z}^d$ as before. 
	%For each $y\in \mathbb{Z}^{d}$ let $\tau_{y}$ be the translation automorphism $\tau_{y}(x)=x+y$. 
	%Then $\Gamma_{K}=\{\text{id}\}\cup \{ \tau_{-e_{i}}\colon 1\le i \le d \}$. 
	As in the previous example, it is easy to see that $(\mathbb{Z}^{d}, K, \Gamma_{K})$ is transitive, unimodular and infinitely accessible, 
	%Moreover, $\Gamma_{K}(x,x)=\{ \text{id} \}$ for each $x\in \vec{\mathbb{Z}}^{d}$ so $(\Z,K)$ is unimodular. 
	hence \thref{thm:main} applies. %See the left side of Figure \ref{fig:sim_lattice} for simulation results for the parking process on oriented $\mathbb{Z}^2$. 
	For this case, the bound for $\E_{p} V<\infty$ in \thref{thm:main} (ii) is $p <2^{-10}e^{-2}\approx 1.3216\times 10^{-4}$. $\blacktriangle$
\end{example}

\begin{example}[Directed $d$-ary trees]\label{example3}
	Let $G=(\mathcal{V},\mathcal{E})$ be $\Thom$ (see Example \ref{example1}). To define an oriented kernel $K$ on $\Thom$, identify a bi-infinite path $\pi$, choose one of the two orientations of $\pi$, and oriented every edge of $\Thom$ towards $\pi$. For each oriented edge $(u,v)$ of $\Thom$, let $K(u,v)=1$, so each car follows a deterministic path towards infinity. If $K(u,v)=1$, we call $v$ the parent of $u$ and $u$ a child of $v$. Assume that $\Gamma_{K}\le \Aut_{K}(G)$ is such that $(G,K,\Gamma_{K})$ is transitive. Then we claim that the triple must not be unimodular for $d\ge 2$. 
	
	To see this, fix a node $x\in \mathcal{V}$ and let $x_{1},\cdots, x_{d}$ be its children. Observe that if an automorphism $\varphi\in \Gamma_{K}$ fixes a node $y\in \mathcal{V}$, then it must also fix its parent. Hence $|\Gamma_{K}(x_{1},x_{1})x|=1$. On the other hand, by transitivity, there exists $\varphi_{i}\in \Gamma_{K}$ such that $\varphi_i(x_{1})=x_{i}$ for each $1\le i \le d$. Since $\varphi_i$ preserves graph distance as well as the oriented kernel $K$, it follows that each $\varphi_{i}$ fixes $x$ and permutes its children. Hence $|\Gamma_{K}(x,x)x_{1}|\ge d\ge 2$, as claimed. Thus \thref{thm:main} does not apply when $d\ge 2$. As mentioned previously, \cite[Proposition 3.5]{tree} shows that $\P(V=\infty)>0$ for $p\ge  1/4$ when $d=2$. Indeed, observe that if there is an infinite ray containing more than a $1/2$ density of initial cars, then $\P(V=\infty)>0$. In fact, a quasi-Bernoulli percolation argument then shows that this occurs when $p>0.038$. Thus in this case $p_{c}\in [0.02,0.038]$, contrary to the transitive unimodular case in \thref{thm:main}. 
	
	For $d=1$, the pair $(G,K)$ becomes the directed one-dimensional lattice where all edges are directed from right to left. So by Example \ref{example2}, if we take $\Gamma_{K}$ to be the group of one-dimensional translations, then the triple $(G,K,\Gamma_{K})$ satisfies the assumption of our theorems. Hence \thref{thm:main} applies. Further information about the process can be obtained at criticality $p=1/2$ due to the simple topology and kernel. Namely, we have 
	\begin{equation}\label{eq:asymptotic_EV_t_directed_Z}
	\E V_{t} \sim \sqrt{2t/\pi}.
	\end{equation}
	To see this, note that by counting the number of cars against parking spots from right to left, the lifespan of a car until parking %(see \eqref{def:liftspan})
	equals the first hitting time of 0 of an associated simple symmetric random walk $(S_{n})_{n\ge 0}$, and
	\begin{equation*}
	V_{t} \overset{d}{=} M_{t}:=\max_{0\le k \le t} S_{k} \quad \forall t\ge 1.
	\end{equation*}
	Therefore, (\ref{eq:asymptotic_EV_t_directed_Z}) follows from the well-known asymptotic of the expected running maximum for simple symmetric random walk.  $\blacktriangle$
\end{example}

\begin{example}[Cayley graphs and infinite accessibility]\label{ex:burnside}
	Let $H$ be a finitely generated infinite group with set of generators $S$. Recall that the Cayley graph of $H$ (with generating set $S$) is defined by the graph $G=(\mathcal{V},\mathcal{E})$ where $\mathcal{V}=H$ and $(a,b)\in \mathcal{E}$ if and only if $ga=b$ or $a=gb$ for some $g\in S$. Then $G$ is an infinite and locally finite simple graph with common degree $|S|$. The group action of $H$ on $H$ itself gives a natural subgroup of automorphisms $\Gamma\le \Aut(G)$, which consists of all left multiplications $a\mapsto ga$. If we endow $G$ with the uniform kernel $K$ and if we let $\Gamma_{K}=\Gamma$, then $(G,K,\Gamma_{K})$ is transitive and unimodular. 
	%Note that this construction also covers our previous examples of unoriented lattices and regular trees with even degree together with the uniform kernel.   
	
	Observe that $(G,K,\Gamma_{K})$ is infinitely accessible if and only if $H$ has some element of infinite order. 
	%This was the case for $\mathbb{Z}^{d}$ since we chose $\Gamma_{K}$ to be the subgroup of translations. In general,
	This leads to the famous question in group theory that Burnside asked in 1902 \cite{burnside1902unsettled}: \textit{Is every finitely generated group, whose every element has finite order, a finite group?} In 1964, Golod and Shafarevich~\cite{golod1964class} resolved this question negatively, and since then various counterexamples with additional properties were found (e.g., \cite{zel1991solution, zel1991solution, lysenok1996infinite}). Therefore, $(G,K,\Gamma_{K})$ is transitive and unimodular but not infinitely accessible whenever we choose $H$ to be any such counterexample to Burnside's problem. $\blacktriangle$
\end{example}

When $p=1/2$, $\E V_t$ grows sublinearly. As we have seen in (\ref{eq:asymptotic_EV_t_directed_Z}), we suspect a polynomial growth $\E V_{t}\sim Ct^{\alpha}$ for some constants $C>0$ and $\alpha\in (0,1)$, which may depend on the underlying graph. On the oriented and unoriented two-dimensional lattices, simulations are given in Figure \ref{fig:sim_lattice}.% suggest $\E V_t\sim C_{1}t^{1/4}$ and $\E V_t\sim C_{2}t^{1/2}$, respectively. %However, we do not have definitive statistical support for this behavior, so we leave this as a further question.

\begin{figure}[!htb]
	
	\includegraphics[width=.45\textwidth]{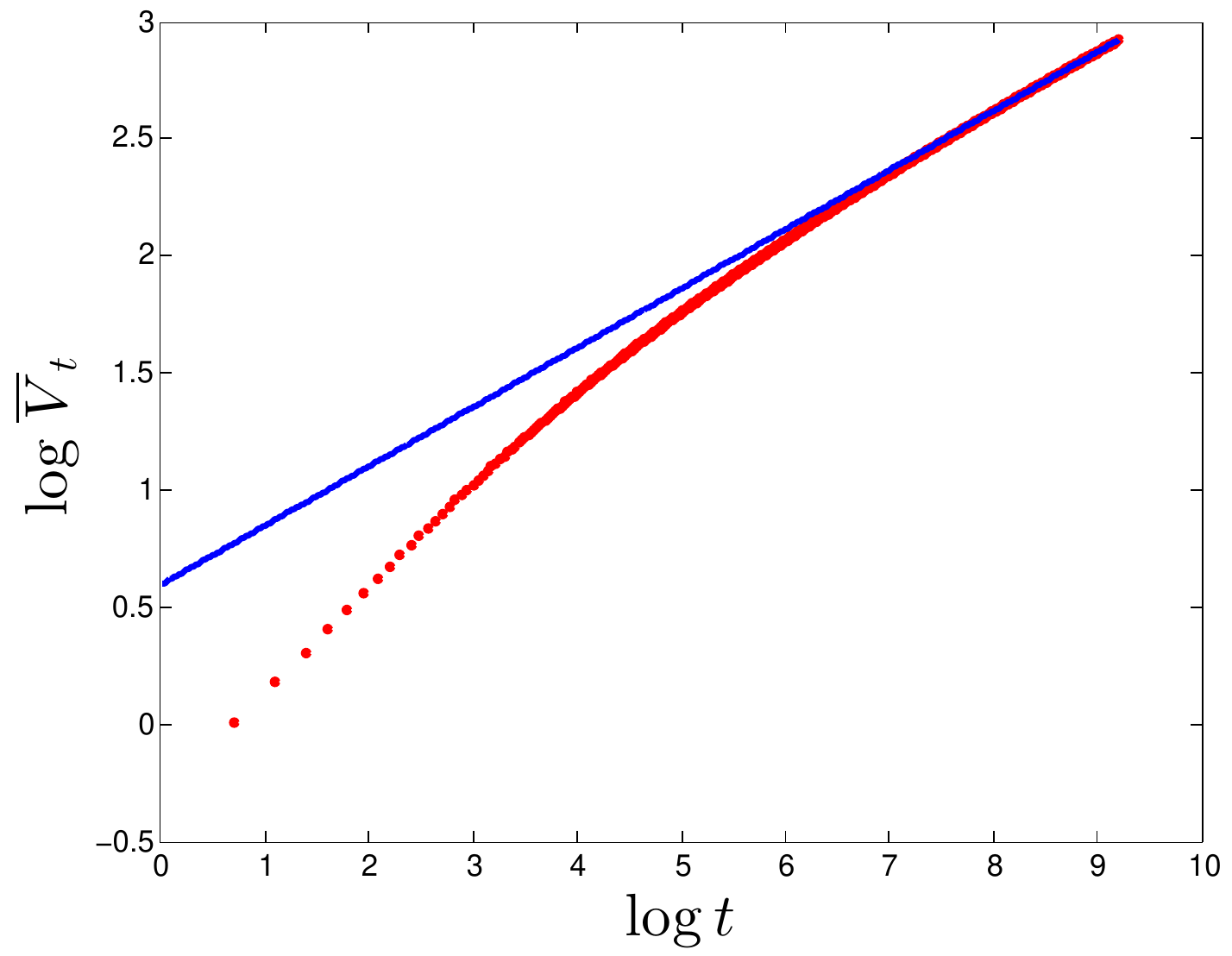}
	\hfill
	\includegraphics[width=0.45\textwidth]{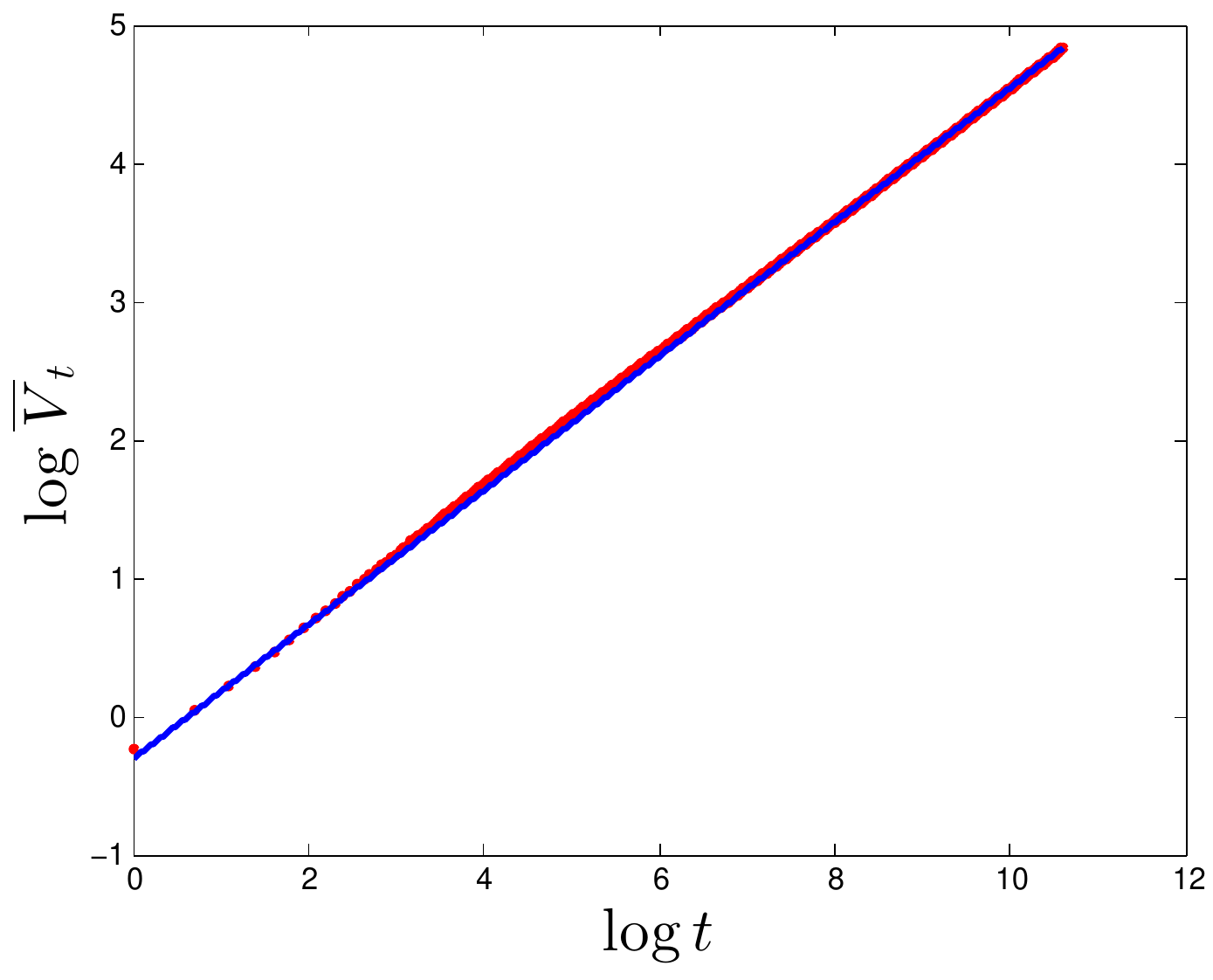}
	\caption{\label{EV_o10K} (left) Simulation of the dynamics on the 
		two-dimensional oriented lattice, which is approximated by an
		$L\times L$ square $\Lambda_L$ with periodic boundary. The 
		expectation $\E V_t$ is approximated by  $\overline V_t=L^{-2}\sum_{x\in \Lambda_L} V_t(x)$, 
		where $V_t(x)$ is the number of car-visits to $x$ in the time interval $[0,t]$. 
		The (red) plot of $\log \overline V_t$ vs.~$\log t$ up to time $t=L$ 
		is consistent with 
		a power law; the (blue) regression line is obtained by the data in the 
		time interval $[L/2,L]$ and has slope $0.2528$. This suggests that 
		$\E V_t$ might increase as $t^{1/4}$. The oriented case 
		is difficult to simulate due to slow convergence 
		and early onset of finite-size effects (which make 
		simulations past time $t=L$ questionable), so
		we do not have a definite statistical support for the $t^{1/4}$ conjecture. \\
		${}\qquad$ (right) Simulation of the dynamics on the 
		two-dimensional unoriented lattice. The notation and the value of $L$ is as in 
		Fig.~\ref{EV_o10K}, but now the 
		simulation is run up to time $t=4L$ (the larger time 
		is justified by much slower symmetric random walks),
		and the regression line is 
		obtained by the data in the 
		time interval $[2L,4L]$ and has slope $0.4854$. This suggests that
		$\E V_t$ increases as $t^{1/2}$, and that the density of blockades decreases 
		as $t^{-1/2}$, which would agree with annihilating 
		walk systems in \cite{bramson1991asymptotic}.	
	}
	\label{fig:sim_lattice}
\end{figure}

\begin{question}
	For the parking process on $(G,K)$ as before, at what rate does $\E V_t$ increase to infinity when $p=1/2$? On the oriented and unoriented $\mathbb{Z}^{2}$, is it true that $\E V_{t}\sim C_{1}t^{1/4}$ and $\E V_{t}\sim C_{2}t^{1/2}$ for some constants $C_{1},C_{2}>0$, respectively? %How do the critical exponents relate to the dimension of the lattice?
\end{question}

In \cite[Theorem 1.2]{tree} the authors observe a discontinuous phase transition for $\E X$  (where $X$ is the total number of cars that never find a spot), and find that it is bounded by 1 for $\alpha\leq 1/2$, and infinite for $\alpha>1/2$. This abrupt transition comes from the fact that a critical Galton--Watson tree is a.s.\ finite. On infinite graphs this does not occur. 
%In our case, \thref{thm:main} (i) shows an analogous result, but (ii) only gives a partial answer for $p<1/2$. Also, it is not hard to see that 
Indeed, the function $p\mapsto \E_{p} V$ is left-continuous on any $(G,K)$ because it is monotone nondecreasing (Proposition \ref{prop:monotonicity}) and lower semi-continuous (being an increasing limit of the continuous functions $p \mapsto \E_p V_t$). In particular, we have $\lim_{p\nearrow 1/2} \E_{p} V=\E_{1/2} V = \infty$, where the second equality follows from \thref{thm:main} (i). We conjecture that there is a critical exponent $\gamma>0$ such that $\E_{p} V\sim (1/2-p)^{-\gamma}$ as $p\nearrow 1/2$ (see Figure \ref{fig:EV at criticality}). Of course, one must first prove that $\E_p V$ is finite for all $p <1/2$. Our current argument for the second part of \thref{thm:main} (ii) only shows $\mathbf{E}_{p}V<\infty$ for sufficiently small $p$.
\begin{figure}[!htb]
	\minipage{0.45\textwidth}
	\includegraphics[width=\linewidth]{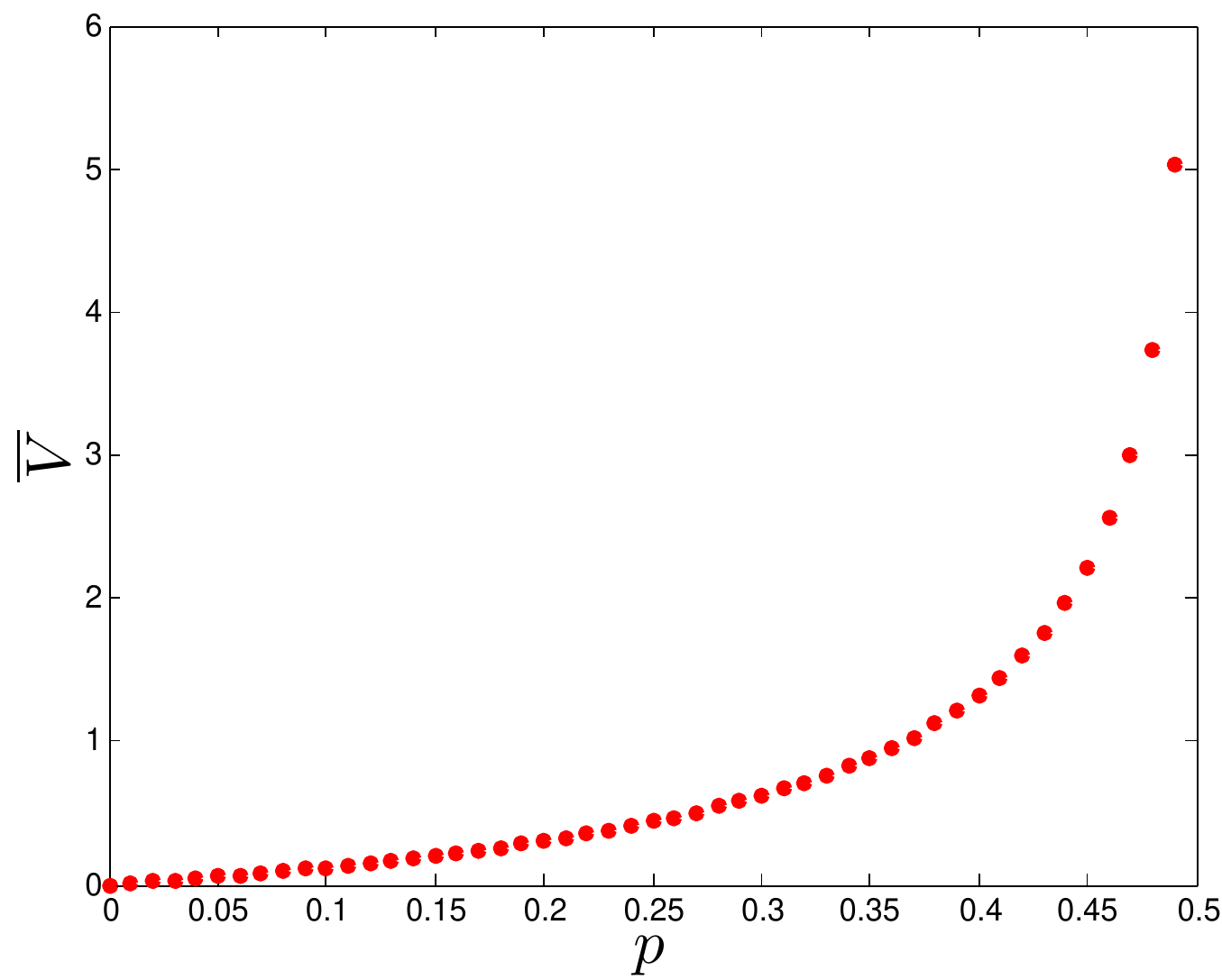}
	\endminipage\hfill
	\minipage{0.45\textwidth}
	\includegraphics[width=\linewidth]{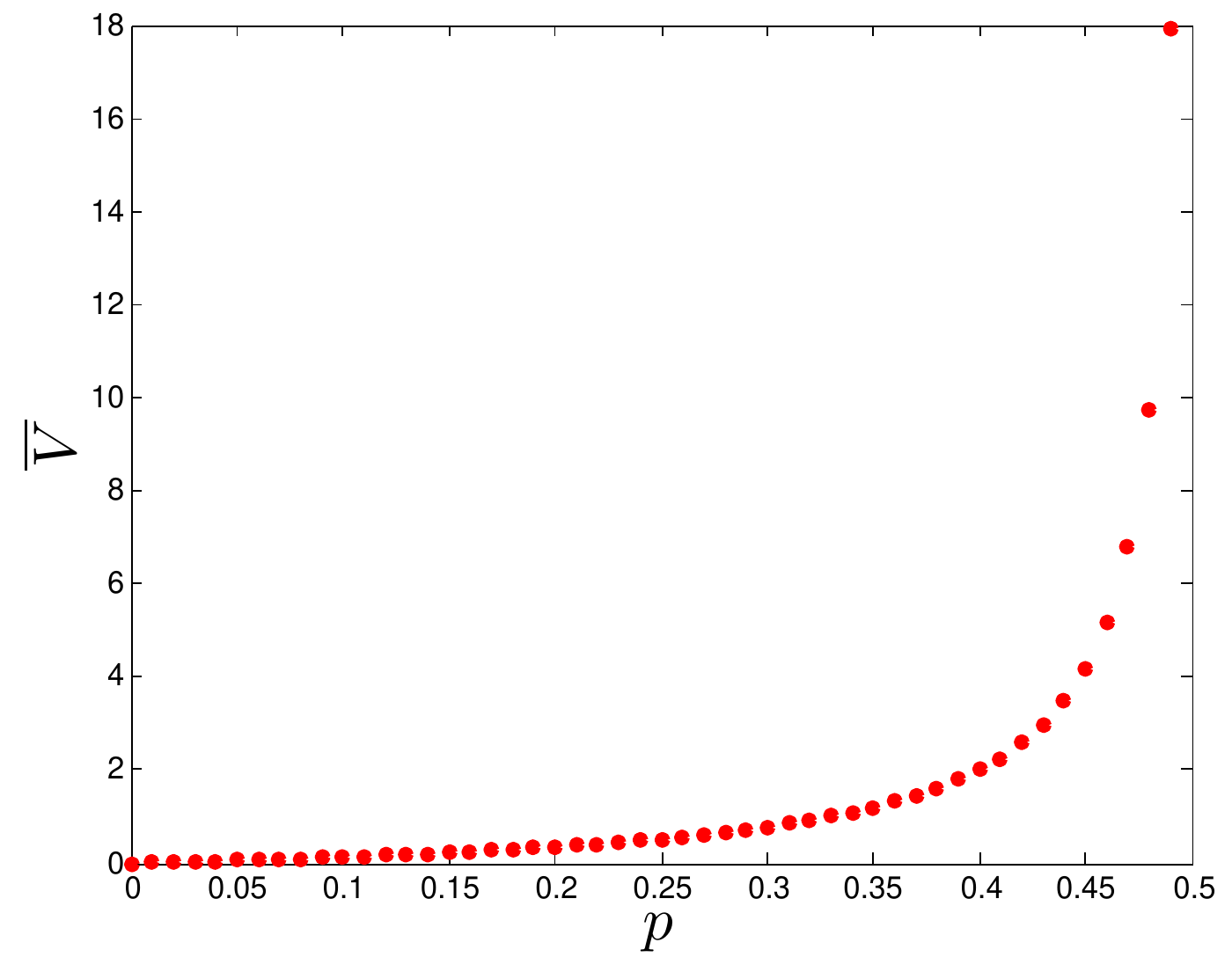}
	\endminipage
	\caption{\label{EV_osub1} Estimation of $\E V$ for subcritical density $p\in[0,0.49]$ 
		on the 
		two-dimensional oriented (left) and unoriented (right) 
		two-dimensional lattice. The dynamics are run on a square 
		$\Lambda_L$ with $L=2,000$, until time $t$ when all blockades are eliminated. 
		Then $\E V$ is approximated by $\overline V=\overline V_t$, as defined 
		in the caption of Fig.~\ref{EV_o10K}. We conjecture that 
		$\E V$ diverges as $(1/2-p)^{-\gamma}$ as $p\nearrow 1/2$, for some 
		critical exponent $\gamma$. In the unoriented case, $\gamma$ appears to be 
		near $1$, while in the oriented case, it appears to be somewhat less than $1/2$.}
	\label{fig:EV at criticality}
\end{figure}

\begin{question}
	For $(G,K,\Gamma_K)$ as in \thref{thm:main}, is $\E_{p} V <\infty$ for $p<1/2$? Also, is $\E_{p} V\sim (1/2-p)^{-\gamma}$ as $p\nearrow 1/2$, for some critical exponent $\gamma>0$?
	
\end{question}

Our final open question concerns a more precise understanding of the evolution of the parking process, and is inspired by the simulations in Figure~\ref{fig_evol}.

\begin{question}
	If $p=1/2$, is $$\lim_{t\to \infty} \P(\root \text{ is closer to a spot than to a car}) = 0?$$
	If so, how fast does this probability decay to $0$?
\end{question}

\vspace{0.3cm}
\section{Proof of \thref{thm:main} and \thref{thm:prob}}\label{sec:main}

For the entire section we assume that  $G=(\mathcal{V},\mathcal{E})$ is 
%such that $(\mathcal{V},\mathcal{E})$ is 
a locally finite infinite graph with a kernel $K$ and subgroup $\Gamma_K$ of $K$-preserving automorphisms of $G$ such that $(G,K,\Gamma_K)$ is transitive and unimodular, and that the infinite accessibility condition holds. We start by recalling the mass-transport principle, which we will use heavily in many proofs. We state our version in the following lemma, which is a minor modification of Theorem 8.7 in Lyons and Peres \cite{lyons2016probability}. 

\begin{lemma}[The mass-transport principle]\label{lemma:mass_transport}
	Let $Z\colon \mathcal{V}\times \mathcal{V}\rightarrow [0,\infty)$ be a collection of random variables such that $\E Z(x,y) = \E Z(\varphi(x),\varphi(y))$ for all $\varphi\in\Gamma_K$ whenever $y$ is accessible from $x$ or $x$ is accessible from $y$, and $\E Z(x,y) = 0$ otherwise.
	Then we have 
	\begin{equation*}
	\E \left[  \sum_{y \in  \mathcal{V} } Z(\root ,y)\right] = \E \left[  \sum_{y \in \mathcal{V}  } Z(y,\root)\right]. 
	\end{equation*}
\end{lemma}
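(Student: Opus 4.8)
The plan is to reduce the statement to the standard mass-transport principle for unimodular transitive graphs as stated in Lyons–Peres \cite[Theorem 8.7]{lyons2016probability}, with the automorphism group taken to be $\Gamma_K$ rather than the full $\Aut(G)$. First I would record the standard fact: if $(G,\Gamma_K)$ is transitive and unimodular in the sense that $|\Gamma_K(u,u)v| = |\Gamma_K(v,v)u| < \infty$ for all $u,v$, then for any $f\colon \mathcal{V}\times\mathcal{V}\to[0,\infty]$ that is \emph{diagonally invariant} (i.e. $f(x,y) = f(\varphi x, \varphi y)$ for all $\varphi\in\Gamma_K$), one has $\sum_{y} f(\root,y) = \sum_{y} f(y,\root)$. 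The proof of this is the usual one: apply the orbit-counting identity coming from unimodularity to interchange the roles of source and sink; I would cite it rather than reprove it, since the excerpt explicitly says the lemma is a minor modification of \cite[Theorem 8.7]{lyons2016probability}.

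Next, the reduction. Given the random field $Z\colon\mathcal{V}\times\mathcal{V}\to[0,\infty)$ from the hypothesis, set $f(x,y) := \E Z(x,y)$. The hypothesis says exactly that $f(x,y) = f(\varphi x,\varphi y)$ for all $\varphi\in\Gamma_K$ when $y$ is accessible from $x$ or $x$ from $y$, and $f(x,y)=0$ otherwise; in particular $f$ is diagonally $\Gamma_K$-invariant on all of $\mathcal{V}\times\mathcal{V}$ (the "otherwise" case is invariant because accessibility is itself preserved by $\varphi\in\Gamma_K$, so the zero set is $\Gamma_K$-invariant). Applying the deterministic mass-transport identity to this $f$ gives
\begin{equation*}
\sum_{y\in\mathcal{V}} \E Z(\root,y) \;=\; \sum_{y\in\mathcal{V}} \E Z(y,\root).
\end{equation*}
Finally I would invoke Tonelli/monotone convergence to pull the expectation outside each (countable, nonnegative) sum, obtaining $\E\big[\sum_y Z(\root,y)\big] = \sum_y \E Z(\root,y)$ and likewise on the right, which yields the claimed identity. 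Both sides may a priori be $+\infty$, and that is fine.

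I do not expect a serious obstacle here; the only point requiring a little care is making sure the hypothesis as phrased ("$\E Z(x,y)=\E Z(\varphi x,\varphi y)$ whenever one is accessible from the other, and $\E Z(x,y)=0$ otherwise") really does deliver full diagonal invariance of $f=\E Z$ — this hinges on the observation that $\varphi\in\Gamma_K$ preserves the accessibility relation (immediate from $K(u,v)=K(\varphi u,\varphi v)$), so the "otherwise" region maps to itself. A secondary minor point is checking the transitivity/unimodularity conditions of this paper's triple $(G,K,\Gamma_K)$ are literally the hypotheses needed for \cite[Theorem 8.7]{lyons2016probability} with group $\Gamma_K$; this is essentially by definition, since Definitions 1 and 2 in the excerpt were chosen to match the percolation version. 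So the proof is really just: (a) quote the standard MTP for $(G,\Gamma_K)$, (b) apply it to $f=\E Z$, (c) Tonelli.
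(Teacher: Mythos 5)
Your proposal is correct and matches the paper's intent exactly: the paper gives no separate proof of this lemma, simply noting it is a minor modification of \cite[Theorem 8.7]{lyons2016probability}, which is precisely the reduction you carry out (diagonal $\Gamma_K$-invariance of $f=\E Z$ via preservation of accessibility, the deterministic MTP for the unimodular transitive group $\Gamma_K$, then Tonelli). Your added care about the ``otherwise'' region being $\Gamma_K$-invariant is the one detail the paper leaves implicit, and you handle it correctly.
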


\vspace{0.2cm}
Next, we establish a 0-1 law for $V$. 

\begin{lemma}\thlabel{lem:01}
	For all $p\in [0,1]$, we have $\P[V= \infty] \in \{0,1\}$.
\end{lemma}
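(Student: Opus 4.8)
The plan is to prove the $0$–$1$ law by exhibiting an ergodic measure-preserving transformation of the probability space $(\Omega,\P_p)$ under which the event $\{V=\infty\}$ is invariant, and then invoking ergodicity. The natural candidate is $\varphi\in\Gamma_K$ supplied by the infinite accessibility hypothesis, acting on $\Omega$ by the shift $\varphi(\omega)(v)=\omega(\varphi^{-1}(v))$. Since $\varphi$ is a $K$-preserving automorphism, the law $\P_p$ is preserved by this action (all three independent coordinate families — the $\pm1$ car indicators, the $K$-walks, and the i.i.d.\ uniforms — are i.i.d.\ over vertices and $\varphi$ just permutes vertices), so $\varphi$ is measure-preserving on $(\Omega,\P_p)$.

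The first substantive step is to check that $\{V=\infty\}$ is $\varphi$-invariant. Here I would use transitivity together with the fact that the law of $V^{(v)}$ is the same for all $v$; more precisely, the parking process is equivariant under the $\Gamma_K$-action, so the total number of visits satisfies $V^{(\varphi(v))}(\omega)=V^{(v)}(\varphi^{-1}\omega)$, and in particular the event that $V^{(\root)}=\infty$ pulls back under $\varphi$ to the event $V^{(\varphi^{-1}(\root))}=\infty$. To get genuine invariance of a single event I would instead work with the tail-type event $A=\{V^{(v)}=\infty\text{ for some }v\}$ or, better, note that since $\P_p[V^{(v)}=\infty]$ is independent of $v$, it suffices to show $\{V^{(\root)}=\infty\}$ has probability in $\{0,1\}$, and for that I can replace it by the $\varphi$-invariant event $A$ and show $\P_p(A\,\triangle\,\{V^{(\root)}=\infty\})=0$ is not even needed — rather, $\P_p(A)\in\{0,1\}$ by ergodicity gives $\P_p(A)=0$ or $1$, and by transitivity $\P_p(A)=0$ iff $\P_p[V^{(\root)}=\infty]=0$ while $\P_p(A)=1$ forces, via a union bound over a countable transitive family and equidistribution, $\P_p[V^{(\root)}=\infty]>0$ hence (applying the dichotomy again) $=1$.

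The second step — the ergodicity of $\varphi$ — is exactly the argument already sketched in the excerpt: given a $\varphi$-invariant event $A$, approximate it in $\P_p$-measure by a cylinder event $B$ depending on finitely many vertices; because infinite accessibility guarantees the orbit $\{\varphi^n(u):n\ge 0\}$ is infinite, and transitivity spreads this to all vertices having infinite $\varphi$-orbits, for large $k$ the cylinder $\varphi^k B$ depends on a vertex set disjoint from that of $B$, so $B$ and $\varphi^k B$ are independent; since $B$ approximates both $A$ and $\varphi^k A=A$, we get $\P_p(A)\approx\P_p(A\cap A)=\P_p(A)^2$, whence $\P_p(A)\in\{0,1\}$. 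Combining: $\{V=\infty\}$ has probability $0$ or $1$.

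I expect the only real subtlety — the ``main obstacle'' — to be the bookkeeping in the second step: verifying that \emph{every} vertex has infinite $\varphi$-orbit (not just the one $u$ from the definition of infinite accessibility) so that an arbitrary finite cylinder $B$ can be displaced to be independent of itself. This follows because $\varphi$ conjugates the orbit of $u$ to the orbit of $\psi(u)$ for any $\psi\in\Gamma_K$, and transitivity makes $\psi(u)$ range over all of $\mathcal V$; one must also ensure the finitely many vertices underlying $B$ can be \emph{simultaneously} pushed off their original positions by a single power $\varphi^k$, which is immediate once each individual orbit is infinite and $\mathcal V$ is locally finite so only finitely many constraints are in play. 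Everything else is a routine approximation argument.
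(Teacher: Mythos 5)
Your ergodicity half is essentially the paper's: the shift by the automorphism $\varphi$ from the infinite accessibility hypothesis preserves $\P_p$, and the cylinder-approximation argument (using that every vertex has an infinite $\varphi$-orbit, which does follow from local finiteness and connectedness once one vertex does) shows $\varphi$ is ergodic. The gap is in the final deduction. You correctly observe that $\{V^{(\root)}=\infty\}$ is not itself $\varphi$-invariant and pass to the invariant event $A=\{V^{(v)}=\infty\text{ for some }v\}$, obtaining $\P_p(A)\in\{0,1\}$ and, when $\P_p(A)=1$, that $\P_p[V^{(\root)}=\infty]>0$. But your closing step, ``applying the dichotomy again, $=1$,'' is circular: the only dichotomy ergodicity gives you is for $\varphi$-invariant events, and $\{V^{(\root)}=\infty\}$ is exactly the non-invariant event whose $0$--$1$ law you are trying to establish. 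Nothing in your argument rules out $0<\P_p[V^{(\root)}=\infty]<1$ while $\P_p(A)=1$.

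The paper closes this gap with a propagation argument that your proposal is missing, and it is the step where accessibility (not just the infinitude of the orbit) is actually used. Assuming $\P[V=\infty]>0$, the ergodic theorem applied along the orbit $\{\varphi^n(x)\}_{n\ge0}$ produces, almost surely, a random $n_0$ with $V^{(\varphi^{n_0}(x))}=\infty$. Since $\varphi$ preserves $K$, the vertex $x$ is accessible from $z:=\varphi^{n_0}(x)$, so it suffices to show that if $K(z,x)>0$ then $\{V^{(z)}=\infty\}\subseteq\{V^{(x)}=\infty\}$ up to null sets; this follows from the Markov property (each of the infinitely many cars visiting $z$ independently steps to $x$ with probability $K(z,x)>0$, so infinitely many do by a conditional Borel--Cantelli argument), and then transitivity transfers the conclusion to the root. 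You should replace your ``dichotomy again'' step with this local propagation along an accessible path from the orbit back to $\root$.
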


\begin{proof}
	Assume that $\P(V=\infty)>0$; we will show it must then be 1. Since $(G,K)$ is infinitely accessible, there exist $\varphi\in \Gamma_{K}$ and $x\in \mathcal{V}$ such that 
	%$\gamma^{+}:=$
	$\{ \varphi^{n}(x)\,:\, n\ge 0  \}$ is infinite and $x$ is accessible from $\varphi(x)$. As noted before the statement of Theorem~\ref{thm:main}, $\varphi$ is ergodic. Therefore, since we have assumed that $\P(V=\infty)>0$, we can a.s. find a (random) integer $n_0$ such that $z := \varphi^{n_{0}}(x)$ satisfies $V^{(z)}=\infty$. Because $x$ is accessible from $z$, it suffices to show that if $x,z \in \mathcal{V}$ satisfy $K(z,x)>0$, then a.s., if $V^{(z)}=\infty$, then $V^{(x)}=\infty$. This follows as usual from the Markov property.
\end{proof}

The next proposition provides a sufficient condition for $V$ to be infinite. Namely, if $\E V_t$ grows linearly and $\E V_t^2$ grows at most quadratically, then $V$ is almost surely infinite. Under our assumptions on $(G,K,\Gamma_K)$, we will then show that this is the case for all $p > 1/2$, and use a more nuanced argument in the critical case $p=1/2$.

\vspace{0.2cm}
\begin{prop} \thlabel{prop:machine}
	If there exist $c, C>0$ such that $ \E V_{t} \geq ct$ and $\E V_{t}^{2}\le Ct^{2}$ for all $t \geq 1$, then $V$ is infinite almost surely.
\end{prop}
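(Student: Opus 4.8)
The plan is to deduce almost-sure infinitude of $V$ from the two moment bounds by a second-moment (Paley–Zygmund) argument applied to the sequence $V_t$, combined with the $0$-$1$ law of \thref{lem:01}. Since $V = \lim_t V_t$ is the increasing limit of the $V_t$, it suffices to show $\P(V=\infty)>0$; the $0$-$1$ law then upgrades this to $\P(V=\infty)=1$.

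\begin{proof}
By \thref{lem:01}, it suffices to show $\P(V=\infty)>0$. Since $V_t \uparrow V$ as $t\to\infty$, for any $M>0$ we have $\P(V \geq M) = \lim_{t\to\infty}\P(V_t \geq M) \geq \limsup_{t\to\infty}\P(V_t \geq ct/2)$, so it is enough to bound $\P(V_t \geq ct/2)$ below uniformly in $t$. By the Paley–Zygmund inequality applied to the nonnegative random variable $V_t$, using $\E V_t \geq ct$ and hence $ct/2 \leq \tfrac12 \E V_t$,
\begin{equation*}
\P\!\left(V_t \geq \tfrac{ct}{2}\right) \geq \P\!\left(V_t \geq \tfrac12 \E V_t\right) \geq \frac{1}{4}\cdot\frac{(\E V_t)^2}{\E V_t^2} \geq \frac{1}{4}\cdot\frac{(ct)^2}{Ct^2} = \frac{c^2}{4C} =: \delta > 0.
\end{equation*}
Thus $\P(V \geq ct/2) \geq \delta$ for every $t\geq 1$. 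Letting $t\to\infty$ gives $\P(V=\infty) = \lim_{t\to\infty}\P(V\geq ct/2) \geq \delta > 0$. By \thref{lem:01}, $\P(V=\infty)=1$.
\end{proof}

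The argument is almost entirely routine; the only point requiring any care is the interchange of limits, namely that $\P(V \geq a) \geq \limsup_{t}\P(V_t \geq a)$ for fixed $a$, which holds because $V_t$ is nondecreasing in $t$ and bounded above by $V$, so $\{V_t \geq a\} \subseteq \{V \geq a\}$ for every $t$. I do not anticipate a genuine obstacle here; the substance of \thref{thm:main} lies in verifying the hypotheses $\E V_t \geq ct$ and $\E V_t^2 \leq Ct^2$ (for $p>1/2$) and in the separate, more delicate treatment of the critical case $p=1/2$, both of which are handled elsewhere in this section.
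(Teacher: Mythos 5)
Your proof is correct and follows essentially the same route as the paper: apply Paley--Zygmund to $V_t$ to get the uniform lower bound $c^2/(4C)$ on $\P(V_t \geq \E V_t/2)$, pass to $V$ using monotonicity, and invoke the $0$-$1$ law of \thref{lem:01}. Your explicit justification of the limit interchange via $\{V_t \geq a\} \subseteq \{V \geq a\}$ is a minor elaboration of a step the paper leaves implicit.
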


\begin{proof}
	The Paley-Zygmund inequality yields
	$$\P[ V_t > \E V_{t} / 2] \geq \f 14 \f{ (\E V_{t})^2 }{ \E V_t^2 } \geq \f {c^{2}}{ 4C}. $$
	Since $\E V_{t} \to \infty$ it follows that $\P[V = \infty] \ge c^{2}/4C$.  \thref{lem:01} implies that $V$ is almost surely infinite.
\end{proof}

In fact, the quadratic upper bound on $\E V_{t}^{2}$ in Proposition~\ref{prop:machine} holds for all $p\in [0,1]$. 
%We write $X \preceq Y$ to denote $X$ is stochastically dominated by $Y$ in the usual sense:  $\P[X \geq z] \leq \P[Y \geq z]$ for all $z \geq 0$.  	

\vspace{0.2cm}
\begin{prop} \thlabel{prop:second_moment}
	$\E V_{t}^{2}\le p(p+1)t^{2}$.
\end{prop}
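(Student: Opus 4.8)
The plan is to bound $V_t^2$ by comparing the parking process to the simpler process in which cars never park — that is, ordinary independent random walks with one walker started from each site that initially holds a car. Write $\widetilde V_t^{(v)}$ for the number of visits to $v$ up to time $t$ in this walk-only process. Since parked cars stop moving, every car-visit in the parking process is also a visit in the walk-only process, so $V_t^{(v)} \le \widetilde V_t^{(v)}$ pointwise, and it suffices to show $\E (\widetilde V_t)^2 \le p(p+1)t^2$. Here we use transitivity so that $\widetilde V_t^{(v)}$ has a law independent of $v$; call a representative $\widetilde V_t$.

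Next I would expand the second moment as a double sum over the $\{0,1\}$-valued initial occupation variables. Let $\eta_x = \ind{x \text{ has a car initially}}$, so $\PP(\eta_x = 1) = p$ and the $\eta_x$ are i.i.d. Conditionally on the environment, write $\widetilde V_t^{(\root)} = \sum_{x} \eta_x N_t(x)$, where $N_t(x)$ is the number of times the independent walk started at $x$ visits $\root$ during times $1,\dots,t$ (this last quantity depends only on that walk's trajectory, independent of $\eta$ and of the other walks). Then
\begin{equation*}
\E (\widetilde V_t)^2 = \sum_{x,y} \E[\eta_x \eta_y]\, \E[N_t(x) N_t(y)] = p^2 \sum_{x \ne y} \E N_t(x)\, \E N_t(y) + p \sum_x \E N_t(x)^2,
\end{equation*}
using independence of $N_t(x)$ and $N_t(y)$ for $x \ne y$ and $\E \eta_x^2 = p$, $\E[\eta_x\eta_y] = p^2$ for $x\ne y$. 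Bounding the cross term by the full sum, this is at most $p^2 \big(\sum_x \E N_t(x)\big)^2 + p \sum_x \E N_t(x)^2$.

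The remaining input is the deterministic identity $\sum_x \E N_t(x) = t$: the total number of walk-visits to $\root$, summed over starting points $x$, equals $\sum_{s=1}^t \sum_x \PP(\text{walk from }x\text{ is at }\root\text{ at time }s)$, and for each fixed $s$ the inner sum is $1$ because the time-$s$ transition probabilities $K^{(s)}(x,\root)$ sum to $1$ over $x$ — this is exactly a mass-transport / reversibility-free stochasticity statement, and it only uses that $K$ is a kernel. Here I should be a little careful about whether $\sum_x K^{(s)}(x,\root)=1$ requires the kernel to be ``doubly stochastic'' in the appropriate unimodular sense; transitivity plus unimodularity of $(G,K,\Gamma_K)$ is what guarantees it, and this is where I'd cite or reprove a consequence of Lemma~\ref{lemma:mass_transport}. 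For the same reason $\sum_x \E N_t(x)^2 \le \big(\sum_x \E N_t(x)\big)^2 = t^2$, since each $\E N_t(x) \ge 0$ and $N_t(x)\le t$... actually more simply $\sum_x \E N_t(x)^2 \le t \sum_x \E N_t(x) = t^2$ because $N_t(x) \le t$ always. Combining, $\E(\widetilde V_t)^2 \le p^2 t^2 + p t^2 = p(p+1)t^2$, which gives the claim.

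The main obstacle is the bookkeeping in the middle paragraph: making precise the conditioning so that $\eta$, the walk increments, and the tie-breaking uniforms are correctly separated, and in particular justifying $\sum_x \E N_t(x) = t$ on a general transitive unimodular $(G,K,\Gamma_K)$ rather than just on $\mathbb{Z}^d$. I expect this to follow cleanly from applying Lemma~\ref{lemma:mass_transport} to $Z(x,y) = $ (expected number of times the walk from $x$ reaches $y$ by time $t$), but one must check the mass-transport hypothesis ($\Gamma_K$-invariance of $\E Z$, vanishing when $y$ is not accessible from $x$) holds for this choice.
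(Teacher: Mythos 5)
Your proposal is correct and follows essentially the same route as the paper: dominate the parking process by the non-parking system of independent walkers, expand the second moment using independence of walkers started at distinct sites, and use the mass-transport principle (equivalently, the fact that $\sum_x K(x,\root)=1$ under transitivity and unimodularity, which the paper verifies via Lemma~\ref{lemma:mass_transport} in the proof of Proposition~\ref{prop:rde}) to obtain $\sum_x \E N_t(x)=t$. The only cosmetic difference is in the diagonal term: you use the pointwise bound $N_t(x)\le t$, whereas the paper mass-transports the second moments, $\sum_y \E W_t^2(y,\root)=\sum_y\E W_t^2(\root,y)\le \E\bigl[\bigl(\sum_y W_t(\root,y)\bigr)^2\bigr]=pt^2$; both give the same final estimate.
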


\begin{proof}
	We dominate the parking process by a system of independent random walks with no parking. Namely, put a particle on each site if and only if there is a car initially; particles perform independent random walks indefinitely with the transition kernel $K$ used for the parking process. We couple this new process with the original parking process by letting each car follow the path of its matched particle. Let $V'_t$ be the number of visits to the origin up to time $t$ in this system (counting multiple visits by the same particle multiple times). Then by the coupling we have 
	\begin{equation}\label{eq:dominance}
	V_t \preceq V_t';
	\end{equation}
	that is, $V_t$ is stochastically dominated by $V_t'$ in the usual sense: $\P[V_t \geq z] \leq \P[V_t' \geq z]$ for all $z \geq 0$.
	
	Now for each $x,y\in \mathcal{V}$ and $t\ge 0$, let $W_{t}(x,y)$ be the number of visits of a particle at $x$ to $y$ up to time $t$. That is,
	\begin{equation*}
	W_{t}(x,y) = \sum_{s=1}^{t} \ind{ \text{a particle starts at $x$ and is at $y$ at time $s$}}.
	\end{equation*}
	Then we can write
	\begin{equation*}
	V_{t}'=\sum_{y\in \mathcal{V}} W_{t}(y,\root). 
	\end{equation*}
	A key observation is 
	\begin{equation*}
	\sum_{y\in \mathcal{V}}W_{t}(\root,y)=t \ind{\text{$\root$ has a particle initially}}.
	\end{equation*}
	Hence Lemma \ref{lemma:mass_transport} yields 
	\begin{equation*}
	\E V_{t}' = \E \left[  \sum_{y\in \mathcal{V}} W_{t}(y,\root) \right]= \E \left[ \sum_{y\in \mathcal{V}} W_{t}(\root,y)\right] = pt,
	\end{equation*}
	and
	\begin{equation*}
	\sum_{y\in \mathcal{V}}\E[W_{t}^{2}(y,\root)] = \sum_{y\in \mathcal{V}}\E[W_{t}^{2}(\root,y)] \le  \E \left[ \left( \sum_{y\in \mathcal{V}} W_{t}(\root,y) \right)^{2}\right] = pt^{2}. 
	\end{equation*}
	Now using the independence between random walk trajectories of particles starting at different sites, we have 
	\begin{eqnarray*}
		\E[ (V_{t}')^{2}] &=& \Var(V_t') + (\E V_t')^2 \\
		&\leq&\sum_{y\in \mathcal{V}}\E[W_{t}^{2}(y,\root)] + (\E V_t')^2\\
		&\leq& pt^2 + (pt)^2.
		%	\sum_{y\in \mathcal{V}}\E[W_{t}^{2}(y,\root)] + \sum_{x\ne y}     \E[W_{t}(x,\root)]\,\E[W_{t}(y,\root)] \\
		%	&\le & p(t+1)^{2} + \left( \sum_{x\in \mathcal{V}}     \E[W_{t}(x,\root)] \right) \left( \sum_{y\in \mathcal{V}} \E[W_{t}(y,\root)] \right) \\
		%	&=& pt^{2} + (\E V_{t}')^{2} = p(p+1)t^{2}.
	\end{eqnarray*}
	Hence the assertion follows from (\ref{eq:dominance}).
\end{proof}

The foundation of our analysis is a recursive formula satisfied by $\E V_{t}$. In the next result, we write ``$y$ has a car'' for ``a car initially starts at $y$.''

\begin{prop} \thlabel{prop:rde}
	For all $t\ge 0$,
	\begin{equation}\label{eq:rde}
	\E V_{t+1}-\E V_{t} = 2p -1 
	+ \P[\text{$\root$ is a spot and $V_t^{(\root)}=0$}].
	\end{equation}  
\end{prop}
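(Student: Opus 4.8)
The plan is to read the left side of \eqref{eq:rde} as the expected number of unparked (i.e.\ still-moving) cars at $\root$, and to match it with the right side through repeated use of the mass-transport principle (Lemma~\ref{lemma:mass_transport}). First, observe that $V^{(\root)}_{t+1}-V^{(\root)}_{t}$ is exactly the number of cars that visit $\root$ at time $t+1$, which by definition is the number of cars that are unparked at time $t$, sit at a neighbor of $\root$, and step to $\root$ at time $t+1$. I would encode this with the collection $Z(x,y)$ equal to the number of unparked cars located at $x$ at time $t$ whose (pre-drawn) path moves to $y$ at time $t+1$. Then $Z(x,y)=0$ unless $K(x,y)>0$ (so $y$ is accessible from $x$), and $\E Z(x,y)=\E Z(\varphi x,\varphi y)$ for every $\varphi\in\Gamma_K$, since the parking process is $\Gamma_K$-invariant in law by transitivity; so Lemma~\ref{lemma:mass_transport} applies. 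Because $\sum_{x\in\mathcal V}Z(x,\root)=V^{(\root)}_{t+1}-V^{(\root)}_{t}$ while $\sum_{y\in\mathcal V}Z(\root,y)=N_t$, where $N_t$ denotes the number of unparked cars sitting at $\root$ at time $t$ (each steps to exactly one vertex), the principle gives $\E V_{t+1}-\E V_{t}=\E N_t$. Everything here is finite, since only cars started inside $\B(\root,t)$ can be at $\root$ by time $t$.

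Second, I would evaluate $\E N_t$ by transporting mass back to the cars' initial sites. Let $Z'(x,y)=\ind{x\text{ starts a car that is unparked at time }t\text{ and located at }y}$. Then $\sum_{x}Z'(x,\root)=N_t$ and $\sum_{y}Z'(\root,y)=\ind{\root\text{ starts a car that is still unparked at time }t}$, so Lemma~\ref{lemma:mass_transport} yields $\E N_t=\P[\root\text{ starts a car not parked by time }t]=p-\P[\root\text{ starts a car that parks by time }t]$. Now I would invoke the car/spot duality: set $Z''(x,y)=\ind{\text{the car from }x\text{ parks in the spot }y\text{ at some time}\le t}$. Since a car parks at most once and a spot is filled at most once, $Z''\le 1$, with $\sum_{y}Z''(\root,y)=\ind{\root\text{ starts a car that parks by time }t}$ and $\sum_{x}Z''(x,\root)=\ind{\root\text{ is a spot parked in by time }t}$; a third application of Lemma~\ref{lemma:mass_transport} gives $\P[\root\text{ starts a car that parks by time }t]=\P[\root\text{ is a spot parked in by time }t]$.

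To finish, I would use the elementary observation that the first visit to a vacant spot always results in a car parking there, so $\{\root\text{ is a spot parked in by time }t\}=\{\root\text{ is a spot and }V^{(\root)}_t\ge 1\}$, whence $\P[\root\text{ is a spot parked in by time }t]=(1-p)-\P[\root\text{ is a spot and }V^{(\root)}_t=0]$. Chaining the identities, $\E V_{t+1}-\E V_{t}=\E N_t=p-\bigl((1-p)-\P[\root\text{ is a spot and }V^{(\root)}_t=0]\bigr)=2p-1+\P[\root\text{ is a spot and }V^{(\root)}_t=0]$, which is \eqref{eq:rde}. The conceptual content is the two dualities --- ``flux of cars into $\root$'' $=$ ``expected number of moving cars at $\root$'', and ``parked cars'' in bijection with ``filled spots'' --- both executed via the mass-transport principle. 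The main obstacle is just the bookkeeping: verifying for each of $Z,Z',Z''$ that it is supported on accessible pairs and has $\Gamma_K$-equivariant expectation so that Lemma~\ref{lemma:mass_transport} applies, correctly handling multiplicities (several unparked cars may share a vertex), and respecting the timing convention under which a car that parks at time $t+1$ still counts as a visit at time $t+1$, while a car parked at some time $\le t$ has stopped moving and so does not contribute to $N_t$.
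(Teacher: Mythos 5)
Your proof is correct, but it takes a genuinely different route from the paper's. The paper computes $\E V_{t+1}^{(\root)}$ directly: it sums over neighbors $y$ of $\root$ and times $s\le t+1$ the expected number of cars stepping from $y$ to $\root$ at time $s$, conditions on the configuration at time $s-1$ to extract the factor $K(y,\root)$, and notes that the number of unparked cars at $y$ at time $s-1$ equals the number of cars visiting $y$ at time $s-1$ (counting the initial car at $y$ as a time-$0$ visit) minus the indicator that a car parks there by that time; telescoping over $s$, invoking transitivity to move the resulting expectations from $y$ back to $\root$, and using the mass-transport principle only once---to show $\sum_{y}K(y,\root)=1$---yields the recursion. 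You instead reduce the increment $\E V_{t+1}-\E V_{t}$ through a chain of three mass-transport identities: first to the expected number of moving cars at $\root$ at time $t$, then to the survival probability $\P[\text{the car started at }\root\text{ is unparked at time }t]$, and finally, via the finite-time car/spot duality, to the fill probability of the spot at $\root$. In substance, your first two steps are the paper's Proposition \ref{prop:EV_t and survival prob.} (which the paper proves separately and uses only for part (ii)), and your third step is a time-truncated version of Lemma \ref{lem:relate}; the closing observation that a spot is filled by time $t$ if and only if it is a spot that is visited by time $t$ appears in both arguments. Your version is more modular and arguably more conceptual, making the role of the two dualities explicit; the paper's is self-contained at that point in the development and avoids needing the finite-time duality. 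The hypotheses of Lemma \ref{lemma:mass_transport} (vanishing of $\E Z(x,y)$ off accessible pairs, $\Gamma_K$-equivariance of expectations) do hold for each of your $Z$, $Z'$, $Z''$, as you note, and your timing conventions are consistent (for instance, at $t=0$ both sides of the recursion equal $p$).
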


\begin{proof}
	First we write  	
	\begin{equation}
	\E V_{t+1}^{(\root)}=  \sum_{y \in N(\root)} \sum_{s=1}^{t+1}\,\, \E\#\{\text{cars that visit $\root$ at time $s$ through $y$}\}.
	\end{equation}	
	By conditioning on the ``information up until time $s-1$'' and partitioning the space according to whether $y$ is an available spot at time $s-1$, we see that each unparked car at $y$ at time $s-1$ visits $\root$ independently with probability $K(y,\root)$. This gives (recall that ``spot'' is short for ``parking spot'') 
	\begin{align*}
	&\sum_{y \in N(\root)} \sum_{s=1}^{t+1} \mathbf{E}\Bigl[ K(y,\root) \#\{\text{cars visiting }y \text{ at time }s-1\} \\
	& \qquad\qquad\qquad\qquad\qquad \times \ind{y \text{ has a car or occupied spot at time $s-1$}}\Bigr] \\
	& \qquad +\sum_{y \in N(\root)}\sum_{s=1}^{t+1} \mathbf{E}\Bigl[ K(y,\root) (\#\{\text{cars visiting }y \text{ at time }s-1\}-1) \\ 
	& \qquad\qquad\qquad\qquad\qquad \times \ind{y \text{ is parked in at time }s-1}\Bigr] \\
	=~&\sum_{y \in N(\root)} K(y,\root) \Bigl[ \mathbf{E}\#\{\text{cars visiting } y \text{ at times } \leq t\} \\
	& \qquad\qquad\qquad \qquad \qquad  - \P(y \text{ is parked in at a time } \leq t)\Bigr] \\
	=~&\left( \sum_{y \in N(\root)} K(y,\root)\right) \Bigl( \mathbf{E}(V_t^{(\root)}+1)\ind{\root \text{ has a car}} \\
	& \qquad\qquad\qquad\qquad\qquad+ \mathbf{E}V_t^{(\root)} \ind{\root \text{ is a spot}} - \P(V_t^{(\root)}>0, \root \text{ is a spot})\Bigr) \\
	=~&\left( \sum_{y \in N(\root)} K(y,\root)\right) \left( \mathbf{E}V_t + 2p-1 + \P(V_t^{(\root)}=0, \root \text{ is a spot})\right).
	\end{align*}
	In the second equality we have used transitivity.
	
	Note that by the transitivity of $(G,K,\Gamma_K)$ and Lemma \ref{lemma:mass_transport}, the sum of in-probabilities at $\root$ equals 1: 
	\begin{equation*}
	\sum_{y \in N(\root)} K(y,\root) = \sum_{y \in \mathcal{V}} K(y,\root) = \sum_{y \in \mathcal{V}} K(\root,y) = 1.
	\end{equation*}
	Combining this with the above identity yields the desired recursion. 
\end{proof}

For the following discussions, it is convenient to introduce a quantity which describes the lifespan of an initial car until parking. Namely, for each $v\in \mathcal{V}$, define $\tau^{(v)}$ by  
\begin{align}\label{def:liftspan}
\tau^{(v)} := \sum_{s=1}^{\infty} \ind{ \text{a car starts at $v$ and is unparked at time $s$}}.
\end{align} 
By translation invariance of the process, the law of $\tau^{(v)}$  does not depend on $v$, so we may drop the dependence on $v$.

In the parking process adding more cars can only increase the lifespan of cars and the number of visits to a fixed site. While this is intuitively obvious, a possible concern is that introducing a new car may change the manner in which we break ties at a spot. This could potentially cause different cars to park in different places, thus shortening the paths of some cars. Of course this is not the case. We explain why in the following proposition.

\begin{prop}\label{prop:monotonicity}
	Write an arbitrary element of our probability space $\Omega = \big( \{-1,1\} \times (\mathcal{V}^{\mathbb{N}}) \times ([0,1]^\mathbb{N}) \big)^{\mathcal{V}}$ as
	\[
	\omega=(\omega_{1},\omega_{2},\omega_{3})_{v\in \mathcal{V}} = (\eta(v), (X_v(n))_{n\ge 1}, (\epsilon_v(n))_{n\ge 1})_{v\in \mathcal{V}}.
	\]
	Let $\omega,\omega'\in \Omega$ be such that $\omega_{1}(v)\le \omega_{1}'(v)$, $\omega_{2}(v)=\omega'_{2}(v)$, and $\omega_{3}(v)=\omega'_{3}(v)$ for all $v\in \mathcal{V}$. Then for all $v\in \mathcal{V}$ and $t\ge 0$, we have $\tau^{(v)}(\omega)\le \tau^{(v)}(\omega')$ and $V^{(v)}_{t}(\omega)\le V^{(v)}_{t}(\omega')$.
\end{prop}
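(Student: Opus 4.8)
The plan is to run the parking processes driven by $\omega$ and $\omega'$ side by side and to compare them through two coupled statistics at each integer time $t$: the set $A_t$ of starting vertices of cars that are still unparked at the end of step $t$, and the set $B_t$ of \emph{blocked} sites at the end of step $t$ --- those that either carried an initial car or are parking spots in which some car has already parked (equivalently, the complement of the set of still-available spots). Since $\omega_2(v)=\omega_2'(v)$ for every $v$, an unparked car started at $v$ occupies the same deterministic site $X_v(t)$ at time $t$ in both processes (with $X_v(0):=v$), and the tie-breaking labels available to each car agree in both processes because $\omega_3(v)=\omega_3'(v)$ (using the natural convention that the label used at step $t$ is $\epsilon_v(t)$). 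Hence the whole evolution of each process is determined by the pair $(A_t,B_t)$, and it suffices to compare these pairs. There is a harmless null-set issue with exactly equal tie-breaking labels; we simply take the labels distinct, or fix once and for all a total order on $\mathcal V$ as a secondary tie-break.

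The heart of the proof is the claim
\[
A_t(\omega)\subseteq A_t(\omega') \quad\text{and}\quad B_t(\omega)\subseteq B_t(\omega') \qquad \text{for all } t\ge 0,
\]
which I would prove by induction on $t$. The base case $t=0$ is immediate: $\omega_1\le\omega_1'$ forces every initial car of $\omega$ to be an initial car of $\omega'$, so $A_0(\omega)\subseteq A_0(\omega')$ and $B_0(\omega)\subseteq B_0(\omega')$. For the inductive step I would fix a target site $u$ and let $C_u(\cdot)$ be the set of cars that are unparked at the end of step $t$ and sit at $u$ after moving at step $t+1$; by the inductive hypothesis for $A_t$ and the equality of walks, $C_u(\omega)\subseteq C_u(\omega')$. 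If $u\in B_t(\omega')$, then no car parks at $u$ at step $t+1$ in $\omega'$, so every car of $C_u(\omega')$ --- in particular every car of $C_u(\omega)$ --- stays unparked in $\omega'$, while $u$ already lies in $B_t(\omega')\subseteq B_{t+1}(\omega')$. If $u\notin B_t(\omega')$, then also $u\notin B_t(\omega)$ by the inductive hypothesis for $B_t$; let $j^\star$ and $w^\star$ be the cars of smallest tie-breaking label in $C_u(\omega)$ and in $C_u(\omega')$ respectively (when nonempty), which are exactly the cars that park at $u$ in $\omega$ and in $\omega'$. Since $C_u(\omega)\subseteq C_u(\omega')$, either $w^\star=j^\star$ or $w^\star\notin C_u(\omega)$; in both subcases no car of $C_u(\omega)\setminus\{j^\star\}$ parks at $u$ in $\omega'$, and whenever some car parks at $u$ in $\omega$ (that is, $C_u(\omega)\ne\emptyset$) some car parks at $u$ in $\omega'$ as well. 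Ranging over all sites $u$ yields $A_{t+1}(\omega)\subseteq A_{t+1}(\omega')$ and $B_{t+1}(\omega)\subseteq B_{t+1}(\omega')$, completing the induction.

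Granting the claim, the proposition follows at once. A car started at $v$ (which, being a car in $\omega$, is also a car in $\omega'$) is unparked at time $s$ in $\omega$ exactly when $v\in A_s(\omega)$, and $A_s(\omega)\subseteq A_s(\omega')$, so
\[
\tau^{(v)}(\omega)=|\{s\ge 1:\ v\in A_s(\omega)\}|\le |\{s\ge 1:\ v\in A_s(\omega')\}|=\tau^{(v)}(\omega'),
\]
with $\tau^{(v)}(\omega)=0$ trivially if $v$ is a spot in $\omega$. Likewise a car $j$ contributes a visit to $v$ at time $s$ precisely when $j\in A_{s-1}(\cdot)$ and $X_j(s)=v$ (the neighbor condition in the definition of a visit being automatic, since consecutive walk positions are adjacent), so summing over $j\in\mathcal V$ and $1\le s\le t$ and using $A_{s-1}(\omega)\subseteq A_{s-1}(\omega')$ gives $V^{(v)}_t(\omega)\le V^{(v)}_t(\omega')$.

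I expect the only genuinely delicate step to be the treatment of ties in the inductive step --- exactly the ``possible concern'' flagged above: an extra car in $C_u(\omega')$ might win the tie at $u$ and displace the car $j^\star$ that parked there in $\omega$. The resolution is the elementary fact that the minimizer over an enlarged finite set, if it belongs to the original set, is already the minimizer of the original set; and when it does not, $j^\star$ simply fails to park in $\omega'$ and drives on, which only lengthens its trajectory --- precisely the mechanism behind the claimed monotonicity. Everything else is bookkeeping with the two sets $A_t$ and $B_t$.
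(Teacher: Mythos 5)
Your argument is correct and is essentially the paper's proof in different packaging: the paper argues by contradiction from a minimal time $t$ at which some car parks earlier in $\omega'$ than in $\omega$, while you run a forward induction maintaining the invariant that the unparked-car set and the blocked-site set for $\omega$ are contained in those for $\omega'$; both versions turn on the identical tie-breaking observation that the minimizer over an enlarged set of labels either coincides with the old winner or displaces it, in which case the displaced car merely drives on. The deduction of $V^{(v)}_t(\omega)\le V^{(v)}_t(\omega')$ from the coupling also matches the paper's.
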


\begin{proof}	
	%	It suffices to show the assertion for the case when $\eta(\omega) = \eta(\omega')$ except at one vertex, say, $x\in \mathcal{V}$. 
	Observe that the second assertion is implied by the first. Namely, suppose a site $y\ne x$ has a car initially which visits $v\in \mathcal{V}$ at some time $t\ge 1$ in the $\omega$-trajectory. Then $\tau^{(y)}(\omega)\ge t$, and by the first assertion, this yields $\tau^{(y)}(\omega')\ge t$. Since the trajectory of the car started at $y$ is shared in both $\omega$- and $\omega'$- trajectories, this implies that the car started at $y$ still visits $v$ at time $t$ in the $\omega'$-trajectory. Hence $V^{(v)}(\omega)\le V^{(v)}(\omega')$, as desired.    
	
	Now we show $\tau^{(v)}(\omega)\le \tau^{(v)}(\omega')$ for all $v\in \mathcal{V}$. For a contradiction, let $t \geq 1$ be the smallest time for which there is a $y$ such that $t = \tau^{(y)}(\omega') < \tau^{(y)}(\omega)$. Let $z$ be the site that the car $(y,\mathtt{unparked})$ visits at time $t$. Since $z$ is a spot and $(y,\mathtt{unparked})$ does not park at time $t$ in the $\omega$-trajectory, some other car $(u,\mathtt{unparked})$ parks at $z$ at some time $s\le t$. If $s\le t-1$, then by the minimality of $t$, $\tau^{(u)}(\omega')\ge \tau^{(u)}(\omega)=s$, so $(u,\mathtt{unparked})$ visits site $z$ at time $s$ in the $\omega'$-trajectory. Since $s\le t-1$, this means that the spot at site $z$ is already occupied before time $t$ in the $\omega'$-trajectory, which is a contradiction. Hence we may assume $s=t$, which means that $z$ is an open spot at time $t$ in the $\omega$-trajectory as well. Since $(y,\mathtt{unparked})$ parks at $z$ at time $t$ in the $\omega'$-trajectory, $\epsilon_{y}(t)$ must be minimum among all cars at site $z$ at time $t$ in the $\omega'$-trajectory. But adding more cars only adds more tie-breaking variables to be compared at site $z$ at time $t$, which would imply that the car $(y,\mathtt{unparked})$ also parks at site $z$ at time $t$ in the $\omega$-trajectory, which is a contradiction. This shows $\tau^{(v)}(\omega)\le \tau^{(v)}(\omega')$ for all $v\in \mathcal{V}$, as desired. 
	%	a new car at site $x$ only adds a new tie breaking variable $\epsilon_{x}(t)$ to be compared at site $z$ at time $t$, so this would imply that the car $(y,\mathtt{unparked})$ also parks at site $z$ at time $t$ in the $\omega$-trajectory, which is a contradiction. This shows $\tau^{(v)}(\omega)\le \tau^{(v)}(\omega')$ for all $v\in \mathcal{V}$, as desired. 
\end{proof}

%%% put Janko's thing here

A consequence of monotonicity is that if at least one car parks at a fixed site with probability one, then infinitely many do so with probability one.

\begin{lemma} \thlabel{lem:PV}
	If $\P[V=0]=0$, then $V$ is almost surely infinite.
\end{lemma}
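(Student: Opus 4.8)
The plan is to apply the $0$-$1$ law (Lemma~\ref{lem:01}), which reduces the claim to producing $\P[V=\infty]>0$ whenever $\P[V=0]=0$. First I would record that only the critical density carries content. Summing the recursion of Proposition~\ref{prop:rde},
\[
\E V_t=(2p-1)t+\sum_{s=0}^{t-1}a_s,\qquad a_s:=\P[\root\text{ is a spot and }V_s^{(\root)}=0],
\]
and $a_s$ is nonincreasing. If $p<1/2$, then $\E V_t\ge0$ forces $t^{-1}\sum_{s<t}a_s\ge1-2p$; as $a_s$ converges, its Ces\`aro limit equals its limit, so $\lim_s a_s\ge1-2p>0$, and since $\lim_s a_s=\P[\root\text{ is a spot},\,V=0]\le\P[V=0]$ the hypothesis is never met and the statement is vacuous. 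If $p>1/2$, then $\E V_t\ge(2p-1)t$, which together with $\E V_t^2\le p(p+1)t^2$ (Proposition~\ref{prop:second_moment}) and Proposition~\ref{prop:machine} already gives $V=\infty$ almost surely. So from now on $p=1/2$, where $\E V_t=\sum_{s<t}a_s$ with $a_s\downarrow0$ and the first-moment machine of Proposition~\ref{prop:machine} is unavailable.

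For $p=1/2$ I would run a spreading argument. The key structural input is that $V_T^{(v)}$ is a function of $\omega$ restricted to a ball $B(v,R)$ of finite radius $R=R(T)$: a car producing a visit to $v$ by time $T$ starts within distance $T$ of $v$, and whether such a car has already parked when it passes a given spot is governed by a cascade of earlier visits occurring at strictly decreasing times, so the cascade terminates within $T$ stages and never leaves a ball of radius $O(T^2)$. Next, the set $A(\root)$ of vertices from which $\root$ is accessible is infinite: infinite accessibility supplies $\varphi\in\Gamma_K$ and $u$ with $\{\varphi^n(u):n\ge0\}$ infinite and contained in $A(u)$, and a transitivity element carrying $u$ to $\root$ maps $A(u)$ onto $A(\root)$. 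Since $G$ is locally finite, $A(\root)$ is unbounded, so I can choose $w_1,w_2,\dots\in A(\root)$ with pairwise distances larger than $2R$, making the balls $B(w_i,R)$ pairwise disjoint.

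Now fix $T$ with $\P[V_T\ge1]\ge1/2$, possible since $\P[V_T\ge1]\uparrow\P[V\ge1]=1$. The events $E_i:=\{V_T^{(w_i)}\ge1\}$ are functions of $\omega$ on the pairwise disjoint balls $B(w_i,R)$, hence independent, and $\P[E_i]=\P[V_T\ge1]\ge1/2$ by transitivity, so by the second Borel--Cantelli lemma almost surely infinitely many $E_i$ occur. On $E_i$ some car is unparked at $w_i$ at a time $\le T$, and since $\root$ is accessible from $w_i$ it can thereafter walk to $\root$; one wants to conclude that infinitely many such cars do reach $\root$, so that $V^{(\root)}=\infty$ with positive probability, after which Lemma~\ref{lem:01} upgrades this to almost sure. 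This last propagation step is the main obstacle, since the events ``the car witnessing $E_i$ goes on to visit $\root$'' are neither of finite range nor independent across $i$: they all funnel through the single vertex $\root$, and whether a car parks en route depends on the whole environment. I would handle this by replacing $E_i$ with a finite-range event $E_i'$ that, through the sub-process monotonicity of Proposition~\ref{prop:monotonicity}, already forces a visit to $\root$ in the full process --- a car reaching $w_i$ unparked by time $T$ and then traversing a fixed kernel-positive corridor $\Pi_i$ from $w_i$ to $\root$, checked to stay unparked in the sub-process obtained by deleting all cars outside a bounded enlargement of $B(w_i,R)\cup\Pi_i$ --- and then running a Paley--Zygmund estimate on $\sum_i\mathbf{1}_{E_i'}$, whose mean diverges and whose covariances for far-apart $i\ne j$ can be controlled provided the $w_i$ and the corridors $\Pi_i$ are chosen to spread out so that the enlarged regions overlap only in a fixed bounded neighborhood of $\root$.
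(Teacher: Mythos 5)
Your reduction to the critical case is correct and self-contained: for $p<1/2$ the recursion of Proposition~\ref{prop:rde} together with monotone convergence of $a_s$ shows $\P[V=0]\ge 1-2p>0$, so the hypothesis is vacuous, and for $p>1/2$ Propositions~\ref{prop:rde}, \ref{prop:second_moment} and \ref{prop:machine} give the conclusion unconditionally. The preparatory steps at $p=1/2$ (finite range of $V_T^{(v)}$, infinitude of the set of sites from which $\root$ is accessible, independence of the $E_i$ on disjoint balls, Borel--Cantelli) are also fine. But the propagation step --- which you correctly flag as the main obstacle --- does not close. The events $E_i'$ require a car to traverse a prescribed corridor $\Pi_i$ of length $d(w_i,\root)$ and to remain unparked along it in a sub-process in which the corridor sites are i.i.d.\ cars/spots; both requirements cost a factor exponentially small in $|\Pi_i|$ on a generic $(G,K,\Gamma_K)$ (e.g.\ $\mathbb{Z}^d$ with simple random walk, or a regular tree), since the walk follows a given length-$\ell$ path with probability at most $(1-K_{\min})^{\,c\ell}$ and each vacant spot it meets must be pre-filled. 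Because the $w_i$ are pairwise $2R$-separated in a locally finite graph, $d(w_i,\root)\to\infty$, so $\P[E_i']$ decays exponentially in $i$ (or at best in $d(w_i,\root)$, which local finiteness forces to grow), and $\sum_i\P[E_i']<\infty$. Your claim that the mean of $\sum_i\mathbf{1}_{E_i'}$ diverges is therefore unjustified and in general false; Paley--Zygmund then yields nothing, and a fortiori not infinitely many occurrences. (It happens to work on the oriented line, where the corridor is followed deterministically and the survival probability decays only like $d(w_i,\root)^{-1/2}$, but not in the generality the lemma requires. There is also the secondary issue you note yourself: all corridors funnel into $\root$, so on graphs such as $\mathbb{Z}$ or a tree the enlarged regions are nested or share unboundedly long stretches, not merely a bounded neighborhood of $\root$.)

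The difficulty of forcing a distant unparked car to actually reach $\root$ is exactly what the paper's proof sidesteps by arguing the contrapositive: assume $\P[V<\infty]=1$ (using Lemma~\ref{lem:01}), let $T$ be the last visit time to $\root$ and pick $t_0$ with $\P[T<t_0]\ge 1/2$; on $\{T<t_0\}$ no car starting outside $\B(\root,t_0)$ ever visits $\root$, and intersecting with the event $A$ that no car starts inside $\B(\root,t_0)$ forces $V=0$. Both indicators are decreasing in the car field conditionally on the walks and tie-breakers, so FKG (via Proposition~\ref{prop:monotonicity}) gives $\P[V=0]\ge \tfrac12(1-p)^{|\B(\root,t_0)|}>0$. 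Blocking visits is cheap; creating them is not. If you want to keep a direct argument you would need a genuinely new idea for the propagation step; as written, the proof has a gap at $p=1/2$, which is the only case with content.
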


\begin{proof}
	We prove the contrapositive. By Lemma~\ref{lem:01} we may assume that $\P[V < \infty] = 1$. Let $T$ be the smallest time after which no car visits $\root$, so that $\P[T<\infty]= 1$. Accordingly, let $t_0$ be such that $\P[T < t_0] \geq 1/2$.  Let $A$ be the event that the initial configuration has no cars in $\B(\root,t_0)=\{x \colon \mathrm{dist}(\root,x)\le t_0\}$. The event $\{T < t_0\}$ implies that no car initially outside of $\B(\root,t_0)$ ever visits $\root$, so $\{T < t_0\}\cap A \subset \{V=0\}$.  Write an arbitrary element $\omega$ in $\Omega$ as in the statement of Proposition \ref{prop:monotonicity}. 
	By the same proposition, for every fixed realization of the random walk paths $(X_{v}(n))$ and tie breakers $(\epsilon_{v}(n))$ for all $v\in \mathcal{V}$, the variables $\ind{T < t_0}$ and $\ind{A}$ are nonincreasing functions of the variables $(\eta(v))$ (which determine the vertices initially occupied by cars), so 
	\begin{align*}
	\P(V=0) &\ge \P(\{T<t_{0}\}\cap A) \\
	&= \E[ \P( \{ T<t_{0}\}\cap A \,|\, \big( (X_{v}(n))_{n\ge 1}, (\epsilon_{v}(n))_{n\ge 1}  \big)_{v\in \mathcal{V}}  \,) ] \\
	&\ge \E[ \P(T<t_{0}\,|\,(X_{v}(n)) ) \P(A \,|\, \big( (X_{v}(n))_{n\ge 1}, (\epsilon_{v}(n))_{n\ge 1}  \big)_{v\in \mathcal{V}}\, ) ] \\
	&= \P(A) \E \big[ \P\big (T<t_{0}\,|\, \big( (X_{v}(n))_{n\ge 1}, (\epsilon_{v}(n))_{n\ge 1}  \big)_{v\in \mathcal{V}} \,\big)\big] \\
	&= \P(A) \P(T<t_{0}) \ge  (1/2)(1-p)^{|\mathbb{B}(0,t_0)|},
	\end{align*}
	where we have used the FKG inequality (see \cite{FKG}) and the fact that $A$ is independent of the random walk paths.
\end{proof}

The last ingredient is another monotonicity statement that relates the probability of no visits to $\root$ conditioned on different starting configurations. 

\begin{prop}\label{prop:no visit prob.}
	For all $t\ge 0$ and $p\in (0,1)$, we have 
	\begin{equation*}
	\P(V_{t}^{(\root)}=0\,|\, \text{$\root$ has a car initially})\le \P(V_{t}=0) \le \P(V_{t}^{(\root)}=0\,|\, \text{$\root$ is a spot})
	\end{equation*}	
	and 
	\begin{equation*}
	\P(V^{(\root)}=0\,|\, \text{$\root$ has a car initially})\le \P(V=0) \le \P(V^{(\root)}=0\,|\, \text{$\root$ is a spot})
	\end{equation*}	
\end{prop}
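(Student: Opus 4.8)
The plan is to derive both displayed chains of inequalities from a single monotonicity fact: placing a car at $\root$, rather than leaving $\root$ a parking spot, can only increase the number of car-visits to $\root$. The reduction to this fact is immediate. Conditioning on the status of $\root$, and using $p\in(0,1)$ so that both conditioning events have positive probability,
\[
\P(V_t=0) = p\,\P\!\left(V_t^{(\root)}=0\mid \text{$\root$ has a car}\right) + (1-p)\,\P\!\left(V_t^{(\root)}=0\mid \text{$\root$ is a spot}\right),
\]
so $\P(V_t=0)$ is a convex combination of the two conditional probabilities and hence lies between them; the same identity holds with $V^{(\root)}$ in place of $V_t^{(\root)}$. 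Thus it suffices to prove the one-sided comparison
\[
\P\!\left(V_t^{(\root)}=0\mid \text{$\root$ has a car}\right) \le \P\!\left(V_t^{(\root)}=0\mid \text{$\root$ is a spot}\right),
\]
together with its analogue for $V^{(\root)}$.

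To establish that one-sided inequality, I would build a coupling. Since the vertex-indexed coordinates of $\Omega$ are independent under $\P_p$, conditioning on $\{\text{$\root$ has a car}\}$ (resp.\ $\{\text{$\root$ is a spot}\}$) is the same as fixing $\eta(\root)=1$ (resp.\ $\eta(\root)=-1$) and sampling every other coordinate from $\P_p$. So I would realize both conditioned processes on one probability space, sharing the labels $(\eta(v))_{v\neq\root}$, the walk paths $(X_v(n))_{n\ge1}$, and the tie-breakers $(\epsilon_v(n))_{n\ge1}$, and toggling only $\eta(\root)$. Writing $\omega^{\mathrm{car}}$ and $\omega^{\mathrm{spot}}$ for the two resulting configurations, we have $\eta^{\mathrm{spot}}(v)\le\eta^{\mathrm{car}}(v)$ for all $v$ while the remaining coordinates agree. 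Proposition~\ref{prop:monotonicity} then gives $V_t^{(\root)}(\omega^{\mathrm{spot}})\le V_t^{(\root)}(\omega^{\mathrm{car}})$ for every realization of the shared randomness, and, letting $t\to\infty$, also $V^{(\root)}(\omega^{\mathrm{spot}})\le V^{(\root)}(\omega^{\mathrm{car}})$. Hence $\ind{V_t^{(\root)}(\omega^{\mathrm{car}})=0}\le\ind{V_t^{(\root)}(\omega^{\mathrm{spot}})=0}$ pointwise (and likewise with $V^{(\root)}$), and taking expectations over the shared randomness yields exactly the inequalities above.

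I do not expect a genuine obstacle here; the essential work has already been done in Proposition~\ref{prop:monotonicity}. The one point worth flagging is that the monotonicity is counterintuitive in direction — one asserts that inserting an extra car \emph{at} $\root$ makes it more likely that \emph{some other} car visits $\root$ — and that its justification must rule out the scenario where the new car changes the outcome of a tie-break and thereby diverts some car onto a shorter path; this is precisely the subtlety handled in Proposition~\ref{prop:monotonicity}. Beyond that, the only care needed is the routine bookkeeping: checking that the conditioned laws are product measures differing in a single coordinate, and that the bound survives the $t\to\infty$ limit, which it does since $V_t^{(\root)}\uparrow V^{(\root)}$, so $\{V^{(\root)}=0\}=\bigcap_{t}\{V_t^{(\root)}=0\}$.
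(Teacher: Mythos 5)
Your proof is correct and follows essentially the same route as the paper's: write $\P(V_t=0)$ as the convex combination $p\,\P(V_t^{(\root)}=0\mid\text{car})+(1-p)\,\P(V_t^{(\root)}=0\mid\text{spot})$ and deduce the one-sided comparison of the conditional probabilities from Proposition~\ref{prop:monotonicity}. The coupling you spell out (toggling only $\eta(\root)$ while sharing all other coordinates) is exactly the mechanism the paper leaves implicit when it invokes that proposition.
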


\begin{proof}
	It suffices to show the first part. By Proposition \ref{prop:monotonicity}, 
	\begin{equation*}
	\P(V_t^{(\root)}=0 \,|\,  \text{$\root$ is a spot}) \geq \P(V_t^{(\root)}=0 \,|\, \text{$\root$ has a car initially}). 
	\end{equation*}
	Hence the assertion follows from 
	\begin{align*}
	\P(V_t^{(\root)}=0) &= \P(V_t^{(\root)}=0 \,|\,  \text{$\root$ is a spot})(1-p) \\
	&+ \P(V_t^{(\root)}=0 \,|\, \text{$\root$ has a car initially})p.
	\end{align*}
\end{proof}

We now have what we need to prove \thref{thm:main} (i).  

\begin{proof}[\textbf{Proof of \thref{thm:main} }\textup{(i)}] 
	Note that \eqref{eq:rde} with $p\in [1/2,1)$ and Proposition \ref{prop:no visit prob.} give
	\begin{align}\label{eq:above}
	\E V_{t+1} \ge \E V_{t} + (1-p)\P(V_{t}=0).
	\end{align}
	If $\P(V_t = 0)\to 0$, then $\P(V= 0 ) = 0$ and $V$ is almost surely infinite by \thref{lem:PV}. If $\P(V_t = 0) \to \delta >0$, then \eqref{eq:above} implies that $\E V_{t}  \geq \delta (1-p) t$ for all $t \geq 1$. Thus, \thref{prop:machine} implies $V$ is infinite almost surely.  
	
	To show the second part, let $p\geq 1/2$. Then 
	\begin{equation} \label{no_visits}
	\begin{aligned}
	\P(V_t^{(\root)} = 0, \root \text{ is a spot}) \leq&\ \P(V_t=0) \\
	\to&\ \P(V=0) \leq 1 - \P(V=\infty) = 0.
	\end{aligned}
	\end{equation}
	The recursion in \eqref{eq:rde} gives 
	\begin{equation*}
	\E V_t = (2p-1) t + \E V_{0} +\sum_{s=0}^{t-1} \P(V_{t}^{(\root)}=0, \text{$\root$ is a spot}), 
	\end{equation*}
	and \eqref{no_visits} implies that the last summation is of order $o(t)$.  
\end{proof}

Before we prove \thref{thm:main} (ii) we use unimodularity to relate the probability a car eventually parks to the probability a parking spot is eventually parked in.

\begin{lemma} \thlabel{lem:relate} For any $p\in (0,1)$,
	\begin{align*}
	\P[\text{a car initially at $\root$ parks}] = \P[\text{a spot at $\root$ is parked in}].  \label{eq:relate}
	\end{align*}
\end{lemma}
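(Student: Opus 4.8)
The plan is to use the mass-transport principle (Lemma~\ref{lemma:mass_transport}) with a transport scheme that sends one unit of mass from each parked car to the spot it parks in. Define, for $x,y\in\mathcal V$,
\[
Z(x,y) = \ind{\text{a car initially at $x$ parks, and it parks in the spot at $y$}}.
\]
For a fixed realization, $\sum_{y\in\mathcal V} Z(x,y)$ is the indicator that the car initially at $x$ ever parks (it equals $1$ if that car parks somewhere, $0$ otherwise, since a car parks in at most one spot), while $\sum_{x\in\mathcal V} Z(x,y)$ is the indicator that the spot at $y$ is parked in (at most one car parks in a given spot). Taking $x=\root$ in the first sum and $y=\root$ in the second,
\[
\E\!\left[\sum_{y\in\mathcal V} Z(\root,y)\right] = \P[\text{a car initially at $\root$ parks}], \qquad
\E\!\left[\sum_{x\in\mathcal V} Z(x,\root)\right] = \P[\text{a spot at $\root$ is parked in}],
\]
so the lemma follows once we check that $Z$ satisfies the hypotheses of Lemma~\ref{lemma:mass_transport}.

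The verification has two parts. First, $Z(x,y)$ can be nonzero only if $y$ lies on the random-walk trajectory of the car started at $x$, hence only if $y$ is accessible from $x$; when $x$ is not accessible-connected to $y$ we have $Z(x,y)\equiv 0$, as required. Second, we need the diagonal-invariance $\E Z(x,y) = \E Z(\varphi(x),\varphi(y))$ for all $\varphi\in\Gamma_K$. This is exactly the statement that the parking process is invariant in law under the $\Gamma_K$-action on $\Omega$ (noted in Section~\ref{section:definitions}): the event "the car from $x$ parks in the spot at $y$" is a measurable function of $\omega$ that gets carried by $\varphi$ to the event "the car from $\varphi(x)$ parks in the spot at $\varphi(y)$", because $\varphi$ is a $K$-preserving graph automorphism and so maps car trajectories to car trajectories and preserves all tie-breaking comparisons. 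Since $\P_p$ is $\Gamma_K$-invariant, the two events have equal probability.

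The only genuinely delicate point is the second part — confirming that $\varphi$ really does map the (dynamically defined) parking pattern of one configuration onto that of its image, i.e.\ that the parking rule is equivariant. This is where one must be slightly careful: the rule involves simultaneous motion of infinitely many walks and iterated tie-breaking, so one should argue by induction on time $t$ that the set of occupied spots and the positions/parked-status of all cars at time $t$ in $\varphi(\omega)$ is the $\varphi$-image of the corresponding data in $\omega$. Given that $\varphi$ bijects vertices, preserves adjacency and $K$, and acts on $\omega$ simply by relabeling vertices (so $X_{\varphi(v)}(n)$ in $\varphi(\omega)$ is $\varphi$ applied to $X_v(n)$ in $\omega$, and $\epsilon_{\varphi(v)}(n)=\epsilon_v(n)$), each step of the induction is immediate: a car moves to the same ($\varphi$-image) vertex, a spot is available iff its preimage was, and the minimal tie-breaker is attained by the same car. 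No step of this requires unimodularity or infinite accessibility — only transitivity is used, through the $\Gamma_K$-invariance of $\P_p$, plus the mass-transport principle (which is where unimodularity enters). I would present the transport scheme, record the two bullet-point checks, and relegate the equivariance induction to a one-sentence remark since it is routine.
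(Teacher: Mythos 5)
Your proposal is correct and is essentially the paper's own proof: both define the transport $Z(x,y)=\ind{\text{a car starts at $x$ and parks at $y$}}$, apply the mass-transport principle, and identify the two row/column sums with the two probabilities in question. Your additional verification of the hypotheses of Lemma~\ref{lemma:mass_transport} (accessibility support and $\Gamma_K$-equivariance of the parking dynamics) is a welcome elaboration of a step the paper leaves implicit, but it does not change the argument.
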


\begin{proof}
	For any two sites $x,y\in \mathcal{V}$, let  
	\begin{equation*}
	Z(x,y) = \ind{\text{a car starts at $x$ and it parks at $y$}}.
	\end{equation*}	
	Then by Lemma \ref{lemma:mass_transport}  
	\begin{equation*}\label{eq:pf_fixation}
	\E \left[ \sum_{y \in G} Z(\root,y) \right]= \E \left[ \sum_{y\in G} Z(y,\root)\right].
	\end{equation*}
	Since at most one car parks in each spot, the left hand side equals the probability that a car starts at $\root$ and it eventually parks. On the other hand, the right hand side equals the probability that $\root$ is a parking spot and some car parks there. This proves the assertion.
\end{proof}

The formulas for the probability that a car parks and the probability that $V=0$ are quick consequences of \thref{lem:relate} and \thref{thm:main}.

\begin{proof}[\textbf{Proof of \thref{thm:prob}}]
	First let $p\in [1/2,1)$.  Then \thref{thm:main} (i) implies $\P[V= \infty] =1$, and so $\P(V^{(\root)} = 0, \root \text{ is a spot}) \leq \P(V=0) = 0$. This implies $\P(V^{(\root)}>0 \mid \root \text{ is a spot})=1$, which is the first part of \eqref{eq:spot}. Moreover, applying this to  the relation in \thref{lem:relate} gives
	$$\P[\text{car at $\root$ parks} \,|\, \text{$\root$ has a car initially}\,] = \f{1-p}{p}$$
	for $1/2 \le p \le 1$, which is the first part of \eqref{eq:car}. Note that this probability is 1 at $p=1/2$. Monotonicity of the process ensures that the probability remains $1$ for $p <1/2$. This establishes the second part of \eqref{eq:car}. It remains to show \eqref{eq:spot} for $p\in (0,1/2]$. In this case, \eqref{eq:car} and \thref{lem:relate} yields
	$$\P[V>0\,|\, \text{$\root$ is a spot}]  = \f{p}{1-p}. $$
\end{proof}

Now we turn our attention to Theorem \ref{thm:main} (ii). 
%The first part follows from Theorem 2.2. Indeed, if $p<1/2$, then $\P(V>0 \mid 0 \text{ is a spot}) < 1$, and so $\P(V = \infty) < 1$. By Lemma~\ref{lem:01}, $\P(V=\infty)=0$. Therefore we focus on the second part of Theorem 2.1 (ii), and accordingly assume that $(G,K)$ is infinitely escapable. 
An easy application of the mass-transport principle allows us to write $\E V_{t}$ in terms of survival probabilities $\P(\tau\ge s)$ of a car. 

\begin{prop}\label{prop:EV_t and survival prob.}
	For any $t\ge 1$, 
	\begin{equation*}
	\E V_{t}= \sum_{s=0}^{t} \P[\tau\ge s].
	\end{equation*}
\end{prop}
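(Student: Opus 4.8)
The plan is to apply the mass-transport principle (Lemma~\ref{lemma:mass_transport}) with a cleverly chosen mass function $Z$, just as in the proof of Proposition~\ref{prop:second_moment} but now tracking \emph{unparked cars} rather than raw random-walk visits. For each $x,y\in\mathcal V$ and the fixed horizon $t$, I would set
\begin{equation*}
Z(x,y) = \#\{\,s\in\{1,\dots,t\}\,:\, \text{a car starts at $x$, is unparked at time $s$, and is located at $y$ at time $s$}\,\},
\end{equation*}
i.e.\ the number of times up to $t$ that the (still-unparked) car originating at $x$ passes through $y$. By translation invariance of the parking process (transitivity of $(G,K,\Gamma_K)$), $\E Z(x,y)=\E Z(\varphi(x),\varphi(y))$ for all $\varphi\in\Gamma_K$, and $Z(x,y)=0$ unless $y$ is accessible from $x$; so the hypotheses of Lemma~\ref{lemma:mass_transport} are met. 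Applying the principle gives
\begin{equation*}
\E\Bigl[\sum_{y\in\mathcal V} Z(\root,y)\Bigr] = \E\Bigl[\sum_{y\in\mathcal V} Z(y,\root)\Bigr].
\end{equation*}

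Next I would evaluate the two sides. For the left side, summing over $y$ collapses the position information: for each $s$, the car starting at $\root$ contributes $1$ to $\sum_y Z(\root,y)$ exactly when $\root$ initially has a car and that car is still unparked at time $s$. Hence $\sum_{y\in\mathcal V} Z(\root,y) = \sum_{s=1}^{t}\ind{\text{a car starts at $\root$ and is unparked at time $s$}} = \min(\tau^{(\root)},t)$ in the notation of \eqref{def:liftspan} — more precisely $\sum_{s=1}^t \ind{\tau^{(\root)}\ge s}$. Taking expectations, the left side equals $\sum_{s=1}^{t}\P[\tau\ge s]$. For the right side, $\sum_{y\in\mathcal V} Z(y,\root)$ counts, for each time $s\le t$, the number of cars that are at $\root$ at time $s$ and are still unparked at time $s$. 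I then need to argue this equals $V_t^{(\root)}$ up to the subtlety that a car visiting $\root$ at time $s$ might park there exactly at time $s$: such a car is counted in $V_t^{(\root)}$ (it \emph{visits} $\root$) but, having just parked, might not be "unparked at time $s$". The cleanest fix is to declare in the definition of $Z$ that a car "unparked at time $s$" includes a car that parks \emph{at} time $s$ (parking happens upon arrival, so it genuinely visited $\root$), or equivalently to note that a car parking at $\root$ at time $s$ is the unique such car and it is still legitimately "at $\root$ at time $s$" — either way $\sum_{y} Z(y,\root)=V_t^{(\root)}$. Taking expectations, the right side is $\E V_t$. Combining, $\E V_t = \sum_{s=1}^t \P[\tau\ge s]$, and since $\P[\tau\ge 0]$ would add a term equal to $\P[\text{$\root$ has a car}]=p$ which is \emph{not} what we want, I should double-check the index range against the statement (which starts the sum at $s=0$); the discrepancy is resolved because $V_0^{(\root)}$ may be defined to include time-$0$ occupancy or the statement's $s=0$ term corresponds to the event "$\root$ initially has a car", matching $\E V_0$ in the telescoping of Proposition~\ref{prop:rde}.

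The main obstacle I anticipate is purely bookkeeping: getting the "unparked at time $s$" versus "visits $\root$ at time $s$" distinction exactly right so that $\sum_y Z(y,\root)$ reproduces $V_t^{(\root)}$ on the nose, and matching the summation index range ($s=0$ vs $s=1$) to the convention for $V_0$ used elsewhere in the paper. Once the mass function is set up so that its row-sum over $\root$ telescopes to the truncated lifespan and its column-sum into $\root$ telescopes to the visit count, the identity is immediate from Lemma~\ref{lemma:mass_transport}; no further probabilistic input is needed. I would write the proof in three short moves: (1) define $Z$ and verify the $\Gamma_K$-invariance and accessibility conditions; (2) compute the $\root$-row sum as $\sum_{s}\ind{\tau^{(\root)}\ge s}$ and take expectations; (3) compute the $\root$-column sum as $V_t^{(\root)}$ and take expectations, then invoke the mass-transport principle to equate the two.
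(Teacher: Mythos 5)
Your proposal is correct and essentially identical to the paper's proof: the paper applies the mass-transport principle to the single-time indicator $Z_t(x,y)=\ind{\text{a car is at $x$ initially and visits $y$ at time $t$}}$ to obtain $\E V_t-\E V_{t-1}=\P[\tau\ge t]$ and then telescopes, which is exactly your time-summed mass function evaluated increment by increment. The $s=0$ versus $s=1$ bookkeeping you flag lies in the paper's statement rather than in your argument --- the paper's own proof likewise only produces $\sum_{s=1}^{t}\P[\tau\ge s]$, and since $\P[\tau\ge 0]=1$ the extra term is an immaterial additive constant in the later applications.
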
 

\begin{proof}
	For each $x,y\in \mathcal{V}$, define a random variable $Z_{t}(x,y)$ by 
	\begin{equation*}
	Z_{t}(x,y) = \ind{ \text{a car is at $x$ initially and visits $y$  at time $t$}}.
	\end{equation*}
	Fix $\root\in \mathcal{V}$. Observe that 
	\begin{equation*}
	\sum_{y \in  \mathcal{V}} Z_{t}(y,\root) = \# \{ \text{cars visiting $\root$ at time $t$} \} = V_{t}^{(\root)} - V_{t-1}^{(\root)}. 
	\end{equation*}	
	On the other hand, since each car parks at at most one spot,
	\begin{equation*}
	\sum_{y \in  \mathcal{V}} Z_{t}(\root,y) = \ind{ \text{a car starts at $\root$ and is unparked at time $t$}}.
	\end{equation*}
	Hence by Lemma \ref{lemma:mass_transport}, for all $t\ge 1$,
	\begin{equation*}
	\E V_{t} - \E V_{t-1} = \P[\text{a car starts at $\root$ and is unparked at time $t$}] = \P[\tau \ge t].
	\end{equation*}
	Thus the assertion follows. 
\end{proof}

Next, we need an estimate of the expected time that a random walk spends in a ball of fixed radius. Given a transitive triple $(G,K,\Gamma_K)$ and $v\in \mathcal{V}$, let $( X^{(v)}_{t})_{t\ge 0} \subseteq \mathcal V$ be a random walk trajectory given by the kernel $K$ of a particle initially at $v\in \mathcal{V}$. For each $j\ge 1$, define 
\begin{equation}\label{eq:hitting_time}
\mathtt{t}^{(v)}(j) = \inf\{ t\ge 0\colon \text{dist}(X^{(v)}_{0},X^{(v)}_{t})> j  \},
\end{equation}
the first exit time of $\mathbb{B}(v,j)$. Since $(G,K,\Gamma_K)$ is transitive, the law of $\mathtt{t}^{(v)}(j)$ does not depend on $v$. Define the following generating function 
\begin{equation}\label{eq:generating_ft_Def}
F(s) = \sum_{j=0}^{\infty} \E[ \mathtt{t}^{(v)}(j) ]\,s^{j}. 
\end{equation}
Finally, define $K_{\min}$ to be the minimum (non-zero) transition probability over all of $G$:
\begin{equation*}
K_{\min} = \min\{K(x,y) :  x,y\in \mathcal{V}  \text{ and }K(x,y)>0\}.
\end{equation*}
Note that since $(G,K,\Gamma_K)$ is transitive, and $G$ is locally finite, we have $K_{\min} = \min\{K(\root,y) : K(\root,y)>0\}>0$.
\begin{prop}\label{prop:generating_ft}
	$F(s)<\infty$ for all $|s|<(K_{\min})^2$.
\end{prop}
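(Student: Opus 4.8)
The plan is to bound $\E[\mathtt{t}^{(v)}(j)]$ by something of order $(K_{\min})^{-2j}$ up to a polynomial factor and then read off the radius of convergence of $F$ from the Cauchy--Hadamard formula. Since $(G,K,\Gamma_K)$ is transitive, the law of $\mathtt{t}^{(v)}(j)$ does not depend on $v$, so I would fix the vertex $u$ provided by the infinite accessibility hypothesis and estimate $\E[\mathtt{t}^{(u)}(j)]$; recall that infinite accessibility already guarantees the walk leaves every ball almost surely, so the content here is purely the growth rate in $j$.

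The geometric ingredient I would establish first is: there is a constant $c_0=c_0(G,K,\Gamma_K)$ so that for every $R\ge 0$ there is a $K$-accessible path starting at $u$, of length at most $R+c_0$, which visits a vertex at distance $>R$ from $u$. To produce it, let $\gamma$ be a $K$-accessible path of length $m$ witnessing that $u$ is accessible from $\varphi(u)$; then the concatenation $\gamma\cdot\varphi^{-1}(\gamma)\cdot\varphi^{-2}(\gamma)\cdots$ of its $\Gamma_K$-translates is, for each $n$, a $K$-accessible path from $u$ to $\varphi^{-n}(u)$. Because $\{\varphi^{-n}(u):n\ge0\}$ is infinite and $G$ is locally finite, $\mathrm{dist}(u,\varphi^{-n}(u))\to\infty$; taking $n$ minimal with $\mathrm{dist}(u,\varphi^{-n}(u))>R$ yields a $K$-accessible path leaving $\mathbb B(u,R)$. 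The work is to show this path has length only $R+O(1)$, i.e.\ that the orbit of $\varphi$ escapes to infinity at linear speed (its graph distance from $u$ grows like $n$, not merely $\to\infty$); this is the technical heart of the argument. Granting it, transitivity gives the same statement based at any vertex: from any $w$ there is a $K$-accessible path of length $\le R+c_0$ reaching a vertex at distance $>R$ from $w$.

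The rest is a restart estimate. Put $\ell=2j+c_0$ and observe that a $K$-path of length $\le\ell$ which reaches distance $>2j$ from a starting vertex $w\in\mathbb B(u,j)$ reaches distance $>2j-\mathrm{dist}(u,w)>j$ from $u$, hence leaves $\mathbb B(u,j)$. Partition the time axis into blocks of $\ell$ consecutive steps. Whenever the walk started at $u$ has not left $\mathbb B(u,j)$ by the start of a block it occupies some $w\in\mathbb B(u,j)$, and by the Markov property, independently of the past, it follows the escape path based at $w$ during that block with probability at least $(K_{\min})^{\ell}$, thereby exiting $\mathbb B(u,j)$. Hence $\P[\mathtt{t}^{(u)}(j)>k\ell]\le\bigl(1-(K_{\min})^{\ell}\bigr)^{k}$ for every $k$, and
\[
\E[\mathtt{t}^{(v)}(j)]=\sum_{t\ge0}\P[\mathtt{t}^{(u)}(j)>t]\ \le\ \ell\sum_{k\ge0}\bigl(1-(K_{\min})^{\ell}\bigr)^{k}\ =\ \ell\,(K_{\min})^{-\ell}\ =\ (2j+c_0)\,(K_{\min})^{-2j-c_0}.
\]
Therefore $\limsup_{j\to\infty}\E[\mathtt{t}^{(v)}(j)]^{1/j}\le(K_{\min})^{-2}$, and by Cauchy--Hadamard $F(s)=\sum_{j\ge0}\E[\mathtt{t}^{(v)}(j)]\,s^{j}$ converges whenever $|s|<(K_{\min})^{2}$.

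The main obstacle, as flagged, is the linear bound $R+O(1)$ on the length of a shortest $K$-accessible escape path from a ball of radius $R$ — equivalently, that the orbit $\{\varphi^{-n}(u)\}$ runs off at linear speed, which a priori could fail (a ``parabolic'' $\varphi$ with sublinear orbit growth). One must therefore either upgrade the infinite-accessibility data to a $\varphi$ with positive translation length, or extract a near-geodesic $K$-accessible ray from $u$ directly, using transitivity (and unimodularity) of $(G,K,\Gamma_K)$ to preclude pathologies. Everything else — the reduction to a fixed vertex, the block/Markov estimate, and the Cauchy--Hadamard step — is routine; note also that a weaker bound of only $O(R)$ on escape-path length (which follows more cheaply from infiniteness of the orbit alone) already yields a positive radius of convergence, just not the sharp constant $(K_{\min})^2$.
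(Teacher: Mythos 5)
Your overall strategy is the same as the paper's: a block/restart estimate showing that in each time block of length roughly $2j$ the walk exits $\mathbb{B}(v,j)$ with probability at least $(K_{\min})^{2j+O(1)}$, hence $\E[\mathtt{t}^{(v)}(j)]\le (2j+O(1))\,(K_{\min})^{-2j-O(1)}$, followed by reading off the radius of convergence. (Your bookkeeping is in fact a bit cleaner: the paper routes the same estimate through the hitting probabilities $a_{k,j}=\P[\max_{0\le i\le k}\mathrm{dist}(X_0,X_i)=j]$ and a bivariate generating function $Q(u,s)$, then bounds $F(s)\le Q(1,s)/(1-s)$.) The problem is that you explicitly decline to prove the one step that carries all the content: that from any vertex there is a $K$-accessible path of length $R+O(1)$, not merely some finite length, reaching distance $>R$. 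Without it, your block length is an unspecified function $C(j)$, the bound becomes $\E[\mathtt{t}^{(v)}(j)]\le C(j)(K_{\min})^{-C(j)}$, and unless $C(j)=2j+O(1)$ you do not get radius $(K_{\min})^2$; if $C(j)$ is superlinear the radius is $0$, so this is not merely a lost constant. Your closing remark that an $O(R)$ bound ``follows more cheaply from infiniteness of the orbit alone'' is also wrong on both counts: infiniteness of $\{\varphi^{-n}(u)\}$ only guarantees that the concatenated path $\gamma\cdot\varphi^{-1}(\gamma)\cdots$ leaves $\mathbb{B}(u,R)$ within $|\gamma|\cdot|\mathbb{B}(u,R)|$ steps, which is exponential in $R$ for bounded-degree $G$, and an exponential escape length yields no positive radius of convergence at all.

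The paper fills the missing step as follows: since $\varphi^{-n}(u_0)$ is accessible from $u_0$ and $\mathrm{dist}(u_0,\varphi^{-n}(u_0))\to\infty$, while successive vertices of a $K$-accessible path change their distance to $u_0$ by at most $1$, a discrete intermediate-value selection along such a path produces, for every $m$, a vertex $u_m$ accessible from $u_0$ with $\mathrm{dist}(u_0,u_m)=m$ exactly; transitivity transports this to an arbitrary base vertex, giving $\P[\mathrm{dist}(X_0,X_{2j+1})>2j]\ge(K_{\min})^{2j+1}$ and hence blocks of length exactly $2j+1$. So the construction you are missing is this intermediate-value selection along the accessible path to a distant orbit point --- it does not require positive translation length for $\varphi$, only that the orbit is unbounded. (Your instinct that there is a delicacy here is not unreasonable: the selection directly controls the distance of $u_m$ from $u_0$, while the exponent of $K_{\min}$ is governed by the length of the accessible path to it, and the paper's write-up is terse on reconciling the two. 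But identifying the difficulty is not the same as resolving it.) As submitted, your argument reduces the proposition to an unproved geometric lemma and therefore is not a complete proof.
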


\begin{proof}
	For each $k,j\ge 0$, define the hitting probability
	\begin{equation*}
	a_{k,j} = \P\bigg[ \max_{0\le i \le k} \text{dist}(X^{(v)}_{0},X^{(v)}_{i}) = j \bigg]
	\end{equation*}
	and its generating function 
	\begin{equation*}
	Q(u,s) = \sum_{k,j\ge 0} a_{k,j} u^{k}s^{j}.
	\end{equation*}
	Note that $Q$ is well-defined on  $(-1,1)^{2}$. 
	
	We next claim that there exists a sequence $(u_m)_{m\ge0}$ such $u_m$ is accessible from $u_0$ and $\text{dist}(u_0,u_m)= m$ for every $m$. To see why this holds, let $\varphi \in \Gamma_K$ be such that $\{\varphi^n(u_0) : n \geq 0\}$ is infinite and $u_0$ is accessible from $\varphi(u_0)$. Here, we have used the definition of infinite accessibility along with transitivity. Then the set $\{\varphi^{-n}(u_0) : n \geq 0\}$ is also infinite, and $\varphi^{-n}(u_0)$ is accessible from $u_0$. For a given $m$, pick $n$ such that $\text{dist}(u_0, \varphi^{-n}(u_0)) \geq m$. Then there exists a sequence $u_0 = v_0, v_1, \dots, v_k = \phi^{-n}(u_0)$ such that $K(v_i,v_{i+1}) > 0$ for each $i$. Since $|\text{dist}(v_{i+1},u_0) - \text{dist}(v_i,u_0)| \leq 1$, we can then select $v_i$ such that $\text{dist}(v_i,u_0) = m$ and we set $u_m = v_i$. 
	
	Using the Markov property of the random walk $X_{t} = X_t^{(u_0)}$ on $(G,K)$ it holds for any $\alpha \in \Gamma_K$ that
	\begin{eqnarray*}
		\P[X_{n}^{(\alpha(u_0))}=\alpha(u_n)] =\P[X_{n}=u_n] 
		\ge (K_{\min})^{n}.
	\end{eqnarray*}
	By the triangle inequality and the Markov property, this yields  
	\begin{eqnarray*}
		a_{k,j} &\le& \prod_{\ell=0}^{\lfloor k/(2j+1) \rfloor-1} \P\big[\text{dist}(X^{(v)}_{(2j+1)\ell},X^{(v)}_{(2j+1)(\ell+1)}) \le 2j \big] \\
		&\le& (1-(K_{\min})^{2j+1})^{\frac{k}{(2j+1)}-1}
		\le 3\exp\left(-\frac{(K_{\min})^{2j+1}k}{(2j+1)}\right)
	\end{eqnarray*}
	for any $k,j\ge 0$. Hence by dominated convergence 
	\begin{equation*}
	Q(1,s)= \sum_{j=0}^{\infty} \left( \sum_{k=0}^{\infty} a_{k,j}\right)s^{j} \le 3\sum_{j=0}^{\infty} \frac{s^{j}}{1-\exp(-\frac{(K_{\min})^{2j+1}}{2j+1})}.
	\end{equation*}
	The power series on the right converges whenever $|s|<(K_{\min})^{2}$. Now observe that
	\begin{eqnarray*}
		\sum_{j=0}^{\infty} \E(\mathtt{t}^{(v)}(j)) s^{j} 
		&= & \sum_{j=0}^{\infty} \left(  \sum_{k=0}^{\infty}\P(\mathtt{t}^{(v)}(j)>k) \right) s^{j} \\
		&\le &\sum_{j=0}^{\infty} \left(  \sum_{i=0}^{j} \sum_{k=0}^{\infty} a_{k,i} \right) s^{j} 
		=\sum_{i=0}^{\infty} \sum_{k=0}^{\infty} \left(  a_{k,i}\sum_{j\ge i} s^{j} \right) \\
		&=& \sum_{i=0}^{\infty}  \left( \sum_{k=0}^{\infty} a_{k,i}\right) \frac{s^{i}}{1-s} 
		=  \frac{Q(1,s)}{(1-s)}.
	\end{eqnarray*}
	This shows $F(s)<\infty$ whenever $s\in (-(K_{\min})^{2},(K_{\min})^{2})$.
\end{proof}

Before the proof of Theorem \thref{thm:main} (ii), we need one last deterministic lemma that gives a necessary condition for a car to survive up to time $t$. We call a finite set $H\subset \mathcal{V}$ \emph{busy} if $H$ is connected and there are at least as many cars as spots initially on $H$. 

\begin{lemma}\label{busy_subgraph}
	Let $t\ge 1$. For each $\omega \in \{\tau^{(v)} \ge t\}$, there is a busy set $H = H(\omega)$ such that $H \subseteq \mathbb{B}(v,2t)$ and $H$ contains the trajectory of the car started at $v$ up to time $t$.
\end{lemma}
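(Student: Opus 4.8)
The plan is to track the car $c$ started at $v$ together with the ``chain of obligations'' that forces it to remain unparked up to time $t$. If $c$ is unparked at time $t$, then every spot it has visited during $[1,t]$ was already occupied when $c$ arrived there. Each such occupied spot $z$ was filled by some other car $c'$, which in turn was unparked during the initial segment of its own trajectory, and so must itself have passed over occupied spots, and so on. I would make this precise by building a finite rooted structure: start with the vertex set $R_0$ consisting of the trajectory $\{X^{(v)}_0,\dots,X^{(v)}_t\}$ of $c$ up to time $t$; whenever a vertex on the current set is a spot that was occupied at the relevant time, adjoin the trajectory (up to its parking time, which is at most $t$) of the car that parked there. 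Iterating, this terminates because only finitely many cars are involved (each contributes a trajectory of length $\le t$, all living inside a bounded ball), yielding a finite connected set $H$.

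The key bookkeeping step is the counting inequality: I want to show $H$ contains at least as many cars as spots. Set up an injection from the spots of $H$ into the cars of $H$. Every spot $z$ in $H$ was placed there because it was occupied at some time $\le t$ by a unique car $c_z$, and by construction $c_z$'s trajectory (hence $c_z$'s starting vertex) lies in $H$; moreover distinct occupied spots are parked in by distinct cars, so $z \mapsto (\text{start of } c_z)$ is injective. There is one subtlety: the initial car $c$ at $v$ is not matched to any spot by this rule (it never parks, within $[1,t]$), so it is a ``spare'' car, and in fact it guarantees a strict surplus — but for the definition of \emph{busy} we only need ``at least as many cars as spots,'' so the injection suffices. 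I should also check that a vertex of $H$ cannot simultaneously be counted as a spot and be the start of a car (a spot is never a car initially), and that a spot in $H$ that was never occupied up to the relevant time does not get adjoined — but such spots also cause no problem, since a spot not in the image of the injection only helps the count, wait — that is backwards, so I must be careful to only include in $H$ trajectory-vertices and spots that were genuinely occupied; any spot vertex that happens to land in $H$ as part of some trajectory and was occupied is matched, and the injection goes from \emph{all} spots of $H$ to cars of $H$, which forces me to ensure every spot of $H$ was occupied. The clean way: define $H$ so that every spot vertex it contains lies on the trajectory of some car in $H$ and was occupied when that car passed — then run the injection. I will need to state this invariant carefully during the inductive construction.

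For the diameter bound $H\subseteq \mathbb{B}(v,2t)$: every car whose trajectory is adjoined parks at a spot $z$ that lies within distance $t$ of $v$ (since $z$ is on a trajectory segment of length $\le t$ that ultimately chains back to $c$'s trajectory, all within the ball — I'd prove by induction that every vertex added at stage $n$ is within distance $t$ of $v$, actually within distance $t$), and that car's starting vertex and whole trajectory up to its parking time $\le t$ stays within distance $t$ of $z$, hence within distance $2t$ of $v$. So triangle inequality gives the bound, and I should double-check the constant: a trajectory point of $c$ is within $t$ of $v$; a car parking at such a point started within $t$ of it and its trajectory stays within $t$ of its start, so within $2t$ of $v$; recursively, a car parking at a vertex already shown to be within $t$ of $v$ contributes vertices within $2t$ — but iterating could a priori grow the radius, so the right invariant is: every parked-in spot in $H$ is within distance $t$ of $v$, proved by induction (a spot at stage $n+1$ lies on the trajectory of a car that parked at a stage-$n$ spot, which is within $t$ of $v$; hmm, that only gives $2t$). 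I will instead argue more carefully: actually the cleanest true statement is just that all of $H$ is within $2t$, and the induction hypothesis should be phrased as ``every vertex of the stage-$n$ set is within $t$ of $v$ OR is the parking spot / trajectory of a car chained directly off $c$'s own trajectory'' — I expect the honest bound to come out as $\mathbb{B}(v,2t)$ with a short induction, and pinning down exactly which invariant makes the induction close is the main thing to get right.

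The main obstacle I anticipate is precisely this combinatorial/geometric bookkeeping in the inductive construction of $H$: ensuring simultaneously (a) finiteness and termination, (b) that the spot-to-car injection is well-defined and total on the spots of $H$ (which constrains which vertices are allowed into $H$), and (c) that the diameter stays $\le 2t$. None of these is deep, but they interact, so I would fix the construction with a clean invariant — e.g., ``$H_n$ is a finite connected set, every spot vertex of $H_n$ was occupied at the time the relevant car visited it, and $H_n \subseteq \mathbb{B}(v,2t)$'' — and verify it is preserved at each adjunction step, then take $H = \bigcup_n H_n$ (a finite union). The coupling/monotonicity facts (Proposition~\ref{prop:monotonicity}) and the tie-breaking rules play no role here; this is a purely deterministic statement about a fixed $\omega$, which simplifies matters.
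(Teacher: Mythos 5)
Your exploration ("chain of obligations") strategy is genuinely different from the paper's proof, and its skeleton is sound: the spot-to-car injection you describe does work, because any spot visited by a still-unparked car at time $s\le t$ must be parked in by some car by time $s$, and distinct spots are parked in by distinct cars, none of which is the car at $v$. But the proposal has a real gap exactly where you flag it: the containment $H\subseteq\mathbb{B}(v,2t)$. Your proposed invariant, that every adjoined spot lies within distance $t$ of $v$, is false already at the second generation: a spot $z_2$ on the trajectory of a car $c_1$ that parks at $z_1\in\mathbb{B}(v,t)$ can lie at distance up to $2t-1$ from $v$. Naively iterating "each new trajectory adds at most $t$ to the radius" gives an unbounded radius, and since your finiteness/termination argument also leans on "all living inside a bounded ball," both (a) and (c) of your checklist are open as written. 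The missing idea is a \emph{temporal} invariant, not a purely spatial one: if car $c_i$ parks at $z_i$ at time $s_i$, then any spot $z_{i+1}$ it passes over is visited at some time $t_{i+1}\le s_i-1$, so the car $c_{i+1}$ parking there does so at time $s_{i+1}\le t_{i+1}<s_i$. The parking times strictly decrease along every chain (giving termination and finiteness of $H$), and the distances telescope: a vertex visited by $c_n$ at time $k\le s_n$ satisfies
\begin{equation*}
\mathrm{dist}(v,u)\;\le\; t_1+\sum_{i=1}^{n-1}(s_i-t_{i+1})+s_n\;\le\; t_1+s_1\;\le\;2t ,
\end{equation*}
which is what actually produces the radius $2t$. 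Without this bookkeeping the induction does not close.

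For comparison, the paper avoids the chain entirely. It enlarges the state space to allow "initially parked-in" spots, takes an inclusion-minimal set $H_0$ of initial cars sufficient for $\{\tau^{(v)}\ge t\}$ and an inclusion-maximal subset $H_2\subseteq H_0$ whose cars can be replaced by parked-in spots, and sets $H=H_1\cup\sigma(H_1)\cup H_2\cup\{v\}$ where $\sigma(z)$ is where the car at $z\in H_1=H_0\setminus(H_2\cup\{v\})$ parks. Busyness comes from the bijection $z\mapsto\sigma(z)$, and the $\mathbb{B}(v,2t)$ bound comes from a locality-plus-maximality argument (a car exiting $\mathbb{B}(v,2t)$ before time $t$ cannot influence $\mathbb{B}(v,t)$ by time $t$, so it could have been replaced, contradicting maximality of $H_2$), sidestepping the telescoping altogether. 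Your route can be completed, but only after you add the decreasing-parking-time observation; as submitted, the diameter bound is asserted rather than proved.
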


\begin{proof}
	For this proof, we consider an extension of our space of outcomes (defined in \eqref{eq: Omega_def}) in which the initial configuration, $\omega_1$, consists of cars, parking spots, and initially parked-in spots. Namely, we use the space $\big( \{-1,0,1\} \times(\mathcal{V}^\mathbb{N}) \times ([0,1]^\mathbb{N}) \big)^{\mathcal{V}}$, where the second and third coordinates are the same as before (random walks and tie-breaking variables), but the first coordinate has an additional state 0, which indicates an initially parked-in spot. The dynamics of the process are similar to those from before: cars follow their random walk trajectories and attempt to park in empty spots, but now they never park in initially parked-in spots.
	
	Fix an outcome 
	$(\omega_1,\omega_2,\omega_3)=(\omega_1,\omega_2,\omega_3)_{v \in \mathcal{V}} \in 
	\{\tau^{(v)} \ge t\}$ with the property that $\omega_1(x) \in \{-1,1\}$ for all $x$. 
	For a set $B\subset V$, let 
	$\omega_1^B$ agree with $\omega_1$ on $B$ and
	contain only parking spots on $B^c$. 
	Let $H_0\subset V$ be an inclusion-minimal set
	of vertices that initially contain cars which causes the event to occur; 
	that is, 
	\begin{itemize}
		\item every $x\in H_0$ is initially occupied by a car $(\omega_1(x)=1)$, 
		\item $(\omega_1^{H_0},\omega_2,\omega_3)\in  \{\tau^{(v)} \ge t\}$, but 
		\item $(\omega_1^{H_0'},\omega_2,\omega_3)\notin  \{\tau^{(v)} \ge t\}$
		for all $H_0'\subsetneqq H_0$.  
	\end{itemize}
	%Replace $\omega_1$ by $\omega_1^{H_0}$.
	
	Let $H_2\subseteq H_0$ be an inclusion-maximal set of vertices that initially contain cars in $\omega_1^{H_0}$ such that $ \{\tau^{(v)} \ge t\}$ occurs
	even if the states (in $\omega_1^{H_0}$) of all vertices in $H_2$ are replaced by state 0 (initially parked-in).
	Let $H_1=H_0\setminus (H_2 \cup \{v\})$. Last, we define the configuration
	$\widehat\omega_1$, which has cars at sites in $H_1$, initially parked-in spots at sites in $H_2$, and parking spots elsewhere. We will now argue that $H$ is busy in $\widehat\omega_1$; this will complete the proof of the lemma, since $\widehat\omega_1 \leq \omega_1$. 
	
	For any $z\in H_1$, 
	the car at $z$ must park at some site $\sigma(z)$ by time 
	$t$ (by maximality of $H_2$), and its trajectory from $z$ to $\sigma(z)$ 
	cannot leave  $\mathbb{B}(v,2t)$. The last assertion follows because the configuration inside $\mathbb{B}(v,t)$ up to time $t$ is unaffected by any changes (at any time) to the configuration outside $\mathbb{B}(v,2t)$, and so the initial location of any car that exits $\mathbb{B}(v,2t)$ before time $t$ can be replaced by an initially parked-in spot. This violates maximality of $H_2$.
	
	Define $H=H_1\cup\sigma(H_1)\cup H_2 \cup \{v\}$. 
	Then, for $z\in H_1$, all sites on the
	trajectory from $z$ to $\sigma(z)$ are in $H$. Indeed, 
	every site $u$ on this trajectory must in $\widehat\omega_1$
	either contain 
	a car (so $u\in H_1$), an initially parked-in spot (so $u\in H_2$),
	or a parking spot
	(so $u\in\sigma(H_1)$). The same argument shows that 
	all sites on 
	the ($t-1$)-step trajectory of the car initially at $v$ are in $H$. 
	By minimality of $H_0$, $H$ must be a connected set that includes this trajectory. Because $H_2$ contains no vertices which are initially cars or parking spots, and for each vertex in $\sigma(H_1)$ (initially a parking spot), there is a unique corresponding vertex in $H_1$ (initially a car), $H$ is busy in $\widehat\omega_1$.
	%Finally, use the fact that $\sigma$ is one-to-one, and that sites in $H_2$ are cars in $\omega_1$ to conclude 
	%that $H$ is busy.  
\end{proof}

We are now ready to show \thref{thm:main} (ii).

\begin{proof}[\textbf{Proof of \thref{thm:main}} \textup{(ii)}]
	Suppose $0\le p<1/2$. Rewrite the relation in \thref{lem:relate} as 
	$$\P[V>0\,|\, \text{$v$ is a spot}\,] = \f{p \P[\text{car at $v$ parks}\,|\, \text{$v$ has a car}] }{1-p} \leq \f{p}{1-p}<1.$$
	So the complementary event has positive probability: $$\P[V=0\,|\, \text{$v$ is a spot}\,]>0.$$ By Proposition \ref{prop:no visit prob.}, this implies $\P[V=0]>0$. The $0$-$1$ law in \thref{lem:01} then requires that $\P[V=\infty]= 0$.

	Next, we show $\E V<\infty$ when $p$ is small. 
	Recall that $G$ has at most $(e\Delta)^{j}$ connected subgraphs of size $j$ containing $v$, where $\Delta$ is the maximum degree of the graph, and by a Chernoff bound for Binomial$(j,p)$ variable, the probability of a connected subgraph of size $j$ being busy is at most $(2\sqrt{p(1-p)})^{j}$ when $p<1/2$. Indeed, if $Z_1, \dots Z_j$ are i.i.d. random variables with $\P[Z_i=1]=p= 1-\P[Z_i=-1]$, then this probability is equal to
	\begin{align*}
	\P[Z_1 + \dots + Z_j \geq 0] = \P \left[ \prod_{i=1}^j e^{\alpha Z_i} \geq 1\right] &\leq \left( \E e^{\alpha Z_1}\right)^j \\
	&= \left( pe^{\alpha} + (1-p)e^{-\alpha}\right)^j,
	\end{align*}
	for any $\alpha \geq 0$. Putting $\alpha = \frac{1}{2} \log \frac{1-p}{p}$, we obtain the bound $(2\sqrt{p(1-p)})^j$. In addition to this inequality, we note that $|\mathbb{B}(v,r)|\le (\Delta-1)^{r+1}$. Let $(X_{t})_{t\ge 0}$ be an independent random walk trajectory on $(G,K)$ with $X_{0}=v$. Applying Lemma~\ref{busy_subgraph} and a union bound gives 
	\begin{align*}
	& \P[\tau^{(v)}\ge t] \\
	&\le \sum_{j=1}^{\Delta^{2t+1}} \sum_{\substack{H\text{ connected}\\ |H|=j, v\in H}} (2\sqrt{p(1-p)})^{j} \, \P\big[ \text{$X_{k}$ is in $H$ for all $0\le k \le t$}  \big] \\
	&\le \sum_{j=1}^{\Delta^{2t+1}}  (2e\Delta\sqrt{p(1-p)})^{j} \, \P\big[ \text{$X_{k}$ is in $\mathbb{B}(v,j)$ for all $0\le k \le t$}  \big].
	\end{align*}
	
	Let $\mathtt{t}^{(v)}(j)$ and $F(t)$ be as defined in \eqref{eq:hitting_time} and \eqref{eq:generating_ft_Def}, respectively. Then Proposition \ref{prop:EV_t and survival prob.} and the above bound on $\P[\tau^{(v)} \geq t]$ yield
	\begin{eqnarray*}
		\E V^{(v)} 
		&\le & \sum_{t=0}^{\infty} \sum_{j=1}^{\Delta^{2t+1}} \left(2e\Delta\sqrt{p(1-p)}\right)^{j} \,\P\big[ \text{$X_{k}\in \mathbb{B}(v,j)$ for all $0\le k \le t$}  \big]\\
		&=&  \sum_{j=1}^{\infty} \left(2e\Delta\sqrt{p(1-p)}\right)^{j}  \sum_{t:j\le \Delta^{2t+1}}^{} \,\P\big[ \text{$X_{k}\in \mathbb{B}(v,j)$ $\forall$ $0\le k \le t$}  \big] \\
		&\le& \sum_{j=0}^{\infty} \left(2e\Delta\sqrt{p(1-p)}\right)^{j}  \,\E\big[ \mathtt{t}^{(v)}(j)  \big] = F(s_{p}),
	\end{eqnarray*}
	where $s_{p}=2e\Delta\sqrt{p(1-p)}$. Hence by Proposition \ref{prop:generating_ft}, $\E V^{(v)}<\infty$ whenever $|s_{p}|<(K_{\min})^{2}$. Observe that for $\Z^d$ with simple symmetric random walks, it is sufficient to have $p < (256 d^6 e^2)^{-1}$. On the oriented lattice $\vec{\mathbb{Z}}^{d}$, $ p< (64 d^6 e^2)^{-1}$ yields $\E_{p}V<\infty$.  
\end{proof}

\section*{acknowledgment}
The research of MD is supported by an NSF CAREER grant. JG was partially supported by the NSF grant DMS--1513340, Simons Foundation Award \#281309, and the Republic of Slovenia's Ministry of Science program P1--285. DS was partially supported by the NSF TRIPODS grant CCF--1740761.

\small{
	\bibliographystyle{amsalpha}  
	\bibliography{parking}  
}
	
\end{document}